\theoremstyle{plain}
\newtheorem{theorem}{\bf Theorem}[section]
\newtheorem{lemma}[theorem]{\bf Lemma}
\newtheorem{cor}[theorem]{\bf Corollary}
\newtheorem{problem}[theorem]{\bf Problem}
\newtheorem{prop}[theorem]{\bf Proposition}
\newtheorem{nota}[theorem]{\bf Notation}
\newtheorem{remark}[theorem]{\bf Remark}
\newtheorem{defi}[theorem]{\bf Definition}
\newtheorem{result}[theorem]{\bf Result}
\def\ex{\hbox{\rm ex}}
\def\EX{\hbox{\rm EX}}
\title{Multicolor Turán numbers}
\author{András Imolay\thanks{Eötvös Loránd University, Budapest, Hungary. The author is partially supported by NKFIH grant KH130371 and partially supported by the ÚNKP-21-2 New National Excellence Program of the Ministry for Innovation and
Technology from the source of the National Research, Development and Innovation Fund.
E-mail: {\tt imolay.andras@gmail.com}}
\and 
János Karl \thanks{Eötvös Loránd University, Budapest, Hungary.. Email: {\tt karlj9836@gmail.com }}
\and Zolt\'an L\'or\'ant Nagy\thanks{ELKH--ELTE Geometric and Algebraic Combinatorics Research Group,
  E\"otv\"os Lor\'and University, Budapest, Hungary. The author is supported by the Hungarian Research Grant (NKFI) No. K  134953.  	E-mail: {\tt nagyzoli@cs.elte.hu}}
\and  Benedek Váli \thanks{Cambridge University, UK.
E-mail: {\tt benedekvali@gmail.com }}
}
\date{}
\begin{document}

\maketitle

\begin{abstract} We consider a natural generalisation of  Turán's forbidden subgraph problem and the Ruzsa-Szemerédi problem by studying the maximum number $\ex_F(n,G)$ of edge-disjoint copies of a fixed graph $F$ can be placed on an $n$-vertex ground set without forming  a subgraph $G$ whose edges are from different $F$-copies. We determine the pairs $\{F, G\}$ for which the order of magnitude of $\ex_F(n,G)$ is quadratic and prove several asymptotic results using various tools from the regularity lemma and supersaturation to graph packing results. 
    
Keywords:   extremal graphs, multicolor, Regularity Lemma  
\end{abstract}

\section{Introduction}

In this paper we address a problem which is a generalisation of Turán-type problems. For a fixed pair of nonempty graphs $F$ and $G$, determine the maximum number of edge-disjoint copies of $F$ on $n$ vertices which does not form a subgraph $G$ whose edges are from different $F$-copies. Colouring each $F$-copy with a colour of its own, the prohibited configuration is a multicolor $G$. This maximum is denoted by $\ex_F(n,G)$ and we refer to it as {\em $F$-multicolor Turán number of $G$} or simply multicolor Turán number of $G$ if $F$ is clear from the context. In this paper we assume $F$ and $G$ have no isolated vertices.\\
Observe that the case when $F$ is an edge ($F=K_2$) is the ordinary Turán problem. For general
background of the forbidden subgraph problem, the reader is referred to the excellent surveys of Füredi and Simonovits \cite{survey} and of Keevash \cite{Keevash-survey}.

Several coloured variants of the forbidden subgraph problem have been raised recently. Alon, Jiang, Miller and Pritikin \cite{Alon_Ramsey} introduced the
problem of finding a rainbow copy of a graph $G$ in an edge colouring of the complete graph on $n$ vertices in which each
colour appears at most $m$ times at each vertex. 
Determining the maximum
number of edges in a graph on $n$ vertices that has a proper edge-colouring with no
rainbow $H$ is the so called rainbow Turán problem, initiated by Keevash, Mubayi, Sudakov and Versta\"ete \cite{Keevash}. 
Keevash, Saks, Sudakov and Versta\"ete investigated the maximum number of edges in a multigraph on $n$ vertices,
that has a simple $k$-edge-colouring not containing a multicoloured copy of $H$ in \cite{KSSV}. Another variant was studied by Conlon and Tyomkyn \cite{CT} where the number of colours is to be minimized in a properly colored complete graph, subject to the property that it does not contain a given number of copies of a fixed graph $F$ with the same colouring pattern.

There are numerous motivations to initiate the study of our variant. While it provides a generalisation to the classical Turán-type theory, it also serves as an example for extremal type coloring problems, which has a long and rich history. One might notice that the exact or asymptotic results depend heavily on the existence of (almost) $F$-factors in extremal graphs avoiding $G$. This highlights the connection to yet another extremal graph theoretic field. One of the most important reasons,  as it will be pointed out later on,  is that this variant  includes  hypergraph Turán problems as well, notably e.g. the  celebrated Ruzsa-Szemerédi $(6,3)$ problem \cite{Ruzsa} and its generalisations. This indicates  the possible applications of results of number theory and applications to number theory, too. Another connection to hypergraph extremal problems follows from the case when $F$ and $G$ are cliques. In particular, the minimum size of independent sets a Steiner triple system STS(n) can have gives exact bounds for $\ex_{K_3}(n,K_r)$ when $r$ is large enough. Finally,  results for certain  pairs of subgraphs $F, G$ may be applied in other extremal problems and  yield improvements in the field of forbidden subgraph problems or generalized Turán-type problems. This connection will be discussed in  section 5.

\bigskip

We formalise the main problem and continue by discussing the main results of the paper.

\begin{nota}
\hfill

$\ex_F(n,G)=\max\{k \ | \ \dot\bigcup_{i=1}^k F_i \subseteq K_n, F_i\sim F, \not\exists G\subseteq \bigcup_{i=1}^k F_i, \  \mbox{s.t.} \ |E(G)\cap E(F_i)|\leq 1  \}.$
\end{nota}
\medskip
The goal  is to determine the order of magnitude of this function for given pairs of graphs and to obtain asymptotic or even sharp results if it is possible.
It is fairly easy to see that $\ex_F(n,G)$ is bounded from above by the Turán number $\ex(n,G)$ for each nonempty graph $F$. We will prove that $\ex_F(n,G)$ is the same order of magnitude as 
$\ex(n,G)$ provided that $\liminf\frac{\ex(n,G)}{\ex(n,F)}>1$, which determines the order of magnitude for a large class of pairs.

Our main result in this direction is as follows.

\begin{theorem}\label{mainTHM}
$\ex_F(n,G)=\Theta(n^2)$ if and only if there is no homomorphism from $G$ to $F$, and $\ex_F(n,G)=o(n^2)$ otherwise.
\end{theorem}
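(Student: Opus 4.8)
The plan is to prove the two implications separately; in each the easy direction is the trivial bound $\ex_F(n,G)\le\binom{n}{2}/e(F)=O(n^2)$, which holds because the copies are edge-disjoint. Throughout write $t=v(F)\ge 2$ (recall $F$ has an edge), and let $F[m]$ be the blow-up of $F$ in which each vertex is replaced by an independent set of size $m$.

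\emph{Suppose there is no homomorphism $G\to F$.} I would give a single explicit construction. Fix pairwise disjoint $V_1,\dots,V_t\subseteq[n]$ with $|V_i|=m$ for a prime $m$ with $n/(2t)\le m\le n/t$ (Bertrand's postulate), and place $F[m]$ on $\bigcup_i V_i$. Since any homomorphism $G\to F[m]$ composed with the projection $F[m]\to F$ would give a homomorphism $G\to F$, there is no such homomorphism, and in particular no embedding; so $F[m]$ is $G$-free and contains no multicolour $G$. It remains to find $\Omega(m^2)=\Omega(n^2)$ pairwise edge-disjoint copies of $F$ inside $F[m]$: identifying each $V_i$ with $\mathbb{Z}_m$, for $(a,b)\in\mathbb{Z}_m^2$ take the copy of $F$ whose role-$i$ vertex is $ia+i^2b\in V_i$; for every edge $ij\in E(F)$ the map $(a,b)\mapsto(ia+i^2b,\ ja+j^2b)$ is a bijection of $\mathbb{Z}_m^2$, as its determinant $ij(j-i)$ is a unit modulo the prime $m>t$, so distinct pairs yield copies with no common edge. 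Thus $\ex_F(n,G)\ge m^2=\Omega(n^2)$, which with the trivial upper bound gives $\ex_F(n,G)=\Theta(n^2)$.

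\emph{Suppose now that $\phi\colon G\to F$ is a homomorphism.} Fix $\varepsilon>0$ and assume, for contradiction, that for arbitrarily large $n$ there are edge-disjoint copies $F_1,\dots,F_k\subseteq K_n$ with $k\ge\varepsilon n^2$ and no multicolour $G$; put $\Gamma=\bigcup_i F_i$, a graph with $\Theta(n^2)$ edges. Apply Szemerédi's regularity lemma to $\Gamma$ with a suitably small parameter, and call a pair of parts \emph{useful} if it is regular with density at least a fixed constant $d$. Since the copies are edge-disjoint, each edge of $\Gamma$ lies in a unique $F_i$, so the number of $F_i$ having an edge inside a part, touching the exceptional set, or lying between a non-useful pair is at most the number of such edges, i.e.\ $o(n^2)$; hence some $F_{i_0}$ has all of its edges between useful pairs and avoids the exceptional set. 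Sending each vertex of $F_{i_0}$ to the part containing it is then a homomorphism $j^\ast\colon F\to R$ into the ``useful reduced graph'' $R$, so $j^\ast\circ\phi\colon G\to R$ is a homomorphism, and the counting lemma for regular pairs produces $\Omega(n^{v(G)})$ copies of $G$ in $\Gamma$ that follow this pattern.

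It remains to count the copies of $G$ in $\Gamma$ that are \emph{not} multicolour. Such a copy has two distinct edges $f,f'$ lying in a common $F_i$; choosing $i$ (at most $k\le n^2$ ways), mapping $f$ and $f'$ onto edges of $F_i$ ($O(1)$ ways, which pins down at least $3$ vertices of the copy), and extending over the remaining at most $v(G)-3$ vertices ($O(n^{v(G)-3})$ ways) bounds their number by $O(n^{v(G)-1})=o(n^{v(G)})$. For $n$ large this is smaller than the $\Omega(n^{v(G)})$ copies found above, so $\Gamma$ contains a multicolour $G$ --- a contradiction; hence $\ex_F(n,G)<\varepsilon n^2$ eventually, and as $\varepsilon$ was arbitrary, $\ex_F(n,G)=o(n^2)$. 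I expect the regularity step to be the crux: one has to check that a single well-behaved copy $F_{i_0}$ already forces a homomorphic image of $G$ in the reduced graph, and that the counting lemma survives the non-injectivity of $\phi$ (and hence of $j^\ast\circ\phi$), collisions among vertices of $G$ landing in a common part affecting only the lower-order $O(n^{v(G)-1})$ term; everything else is the elementary counting just given and the explicit construction.
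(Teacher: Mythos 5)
Your proposal is correct; the upper-bound half ($o(n^2)$ when a homomorphism $G\to F$ exists) is essentially the paper's proof, while the lower-bound half replaces the paper's key tool with an explicit construction. For the $o(n^2)$ direction the paper argues exactly as you do: regularity lemma, fewer ``destroyed'' edges than there are $F$-copies so some $F_{i_0}$ survives intact, hence a homomorphism $F\to R$ and by composition $G\to R$, the key lemma gives $\Omega(n^{v(G)})$ copies of $G$, and the $k\binom{e(F)}{2}\cdot O(n^{v(G)-3})=O(n^{v(G)-1})$ count of non-multicolour copies yields the contradiction. The one statement to tighten is that the number of edges inside parts, touching the exceptional set, or in non-useful pairs is ``$o(n^2)$'': for fixed regularity parameters it is of order $(d+3\varepsilon_{\mathrm{reg}})n^2/2$, which is $\Theta(n^2)$, so you must choose $d$ and $\varepsilon_{\mathrm{reg}}$ small relative to the density constant of $\Gamma$ (the paper takes $d=c/e(F)$ and $\varepsilon_{\mathrm{reg}}<c/(3e(F))$) so that fewer than $k$ edges, hence fewer than $k$ copies $F_i$, are affected; the conclusion that some $F_{i_0}$ survives is then exactly as you say. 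For the $\Theta(n^2)$ direction the paper also works inside $F[m]$, but obtains the packing abstractly: it exhibits the uniform fractional $F$-packing of value $m^2$ and invokes the Haxell--R\"odl theorem to convert it into an integral packing of size $m^2-o(n^2)$. Your Vandermonde-type construction (role-$i$ vertex $ia+i^2b$ over $\mathbb{Z}_m$ with $m>v(F)$ prime, the determinant $ij(j-i)$ being a unit) is more elementary and in fact slightly stronger, giving a perfect decomposition of $F[m]$ into exactly $m^2$ edge-disjoint copies; the only cost is the restriction to prime $m$, which Bertrand's postulate and monotonicity in $n$ absorb at the price of a worse (but still positive) constant, $n^2/(4v(F)^2)$ versus the paper's $n^2/v(F)^2-o(n^2)$. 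Either way one gets $\ex_F(n,G)=\Theta(n^2)$.
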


We also obtained asymptotic results.

\begin{theorem} \label{mainth}
If $\chi(F)<\chi(G)$ then
$$\ex_F(n,G) \sim \frac{\ex(n,G)}{e(F)} \sim \frac{1-\frac{1}{\chi(G)-1}}{2e(F)}n^2.$$
\end{theorem}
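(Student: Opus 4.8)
\bigskip
\noindent\textbf{Proof proposal.}
The plan is to prove the two asymptotic inequalities $\ex_F(n,G)\le (1+o(1))\,\ex(n,G)/e(F)$ and $\ex_F(n,G)\ge (1-o(1))\,\ex(n,G)/e(F)$; the closed form is then immediate from the Erd\H{o}s--Stone--Simonovits theorem, which gives $\ex(n,G)=\bigl(1-\tfrac{1}{\chi(G)-1}+o(1)\bigr)\tfrac{n^2}{2}$. Since $F$ has an edge we have $\chi(F)\ge 2$, so the hypothesis $\chi(F)<\chi(G)$ forces $\chi(G)\ge 3$, hence $\ex(n,G)=\Theta(n^2)$, and in particular $e(G)\ge 2$ and $v(G)\ge 3$.

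For the upper bound, fix $\delta>0$. By the Erd\H{o}s--Simonovits supersaturation theorem there are $\varepsilon>0$ and $n_0$ so that every $n$-vertex graph with $n\ge n_0$ and at least $\ex(n,G)+\delta n^2$ edges contains at least $\varepsilon n^{v(G)}$ copies of $G$. Let $F_1,\dots,F_k\subseteq K_n$ be edge-disjoint copies of $F$ whose union $H$ contains no rainbow $G$. As the $F_i$ are edge-disjoint, each edge of $H$ lies in exactly one $F_i$, so every copy of $G$ inside $H$ has two of its edges inside a common $F_i$. Two edges span at least three vertices, so the number of copies of $G$ through a fixed pair of edges of $H$ is $O(n^{v(G)-3})$; since the number of pairs of edges lying in a common $F_i$ is at most $k\binom{e(F)}{2}=O(n^2)$, the graph $H$ contains only $O(n^{v(G)-1})=o(n^{v(G)})$ copies of $G$. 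Hence for large $n$ the graph $H$ has fewer than $\varepsilon n^{v(G)}$ copies of $G$, which forces $e(H)<\ex(n,G)+\delta n^2$; thus $k=e(H)/e(F)\le(\ex(n,G)+\delta n^2)/e(F)$, and letting $\delta\to 0$ gives the upper bound.

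For the lower bound, put $r=\chi(G)-1\ge\chi(F)$ and let $T$ be the complete $r$-partite graph on $n$ vertices with parts as equal as possible. Then $\chi(T)=r<\chi(G)$, so $T$ contains no copy of $G$ at all, while $e(T)=\ex(n,K_{r+1})=(1+o(1))\,\ex(n,G)$ by Erd\H{o}s--Stone. It therefore suffices to pack $(1-o(1))\,e(T)/e(F)$ edge-disjoint copies of $F$ into $T$, since their union lies inside $T$ and so contains no $G$, a fortiori no rainbow $G$. Because $\chi(F)\le r$, $F$ embeds into $T$ (place the classes of a proper $\chi(F)$-colouring into distinct parts, which works once each part has size at least $v(F)$); by the symmetry of $T$, every edge of $T$ lies in $(1+o(1))D$ copies of $F$ inside $T$ for a common $D=\Theta(n^{v(F)-2})$, while any two edges of $T$ lie together in only $O(n^{v(F)-3})=o(D)$ such copies. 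Applying the Pippenger--Spencer (R\"odl nibble) theorem to the hypergraph on vertex set $E(T)$ whose hyperedges are the copies of $F$ in $T$ produces a matching covering all but $o(e(T))$ edges, that is, $(1-o(1))\,e(T)/e(F)$ edge-disjoint copies of $F$. Combining the two bounds gives $\ex_F(n,G)\sim\ex(n,G)/e(F)\sim\frac{1-\frac{1}{\chi(G)-1}}{2e(F)}n^2$.

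The main obstacle is the approximate $F$-decomposition of the Tur\'an graph in the lower bound: one must check that the auxiliary hypergraph has near-uniform degrees and negligible codegrees so that the nibble applies (alternatively, one can invoke an off-the-shelf approximate $F$-decomposition result for dense, in particular complete multipartite, host graphs). The remaining ingredients --- supersaturation, elementary double counting, and Erd\H{o}s--Stone --- are routine.
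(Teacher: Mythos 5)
Your proposal is correct, and its overall architecture coincides with the paper's: the upper bound is the same supersaturation-plus-double-counting argument (every $G$-copy in the union must contain two edges from a single $F_i$, there are only $O(n^2)$ such monochromatic edge pairs, each lying in $O(n^{v(G)-3})$ copies of $G$, so the union has $o(n^{v(G)})$ copies of $G$ and supersaturation caps its edge count at $\ex(n,G)+o(n^2)$), and the lower bound packs $F$-copies nearly perfectly into the $(\chi(G)-1)$-partite Tur\'an graph. The one genuine difference is the tool used for the near-perfect packing: the paper exhibits a maximal fractional $F$-packing of value $e(T)/e(F)$ (uniform weights, justified by the edge-transitivity of the balanced Tur\'an graph) and then invokes the Haxell--R\"odl theorem to convert it into an integer packing at a cost of $o(n^2)$, whereas you build the auxiliary $e(F)$-uniform hypergraph on $E(T)$ and apply Pippenger--Spencer after verifying near-regularity of degrees ($(1+o(1))D$ with $D=\Theta(n^{v(F)-2})$) and negligible codegrees ($O(n^{v(F)-3})$). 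Both routes are valid here; Haxell--R\"odl lets one sidestep the degree/codegree verification entirely (only a fractional packing is needed, which symmetry hands you for free), while the nibble argument is more self-contained in the sense that it does not require identifying an explicit optimal fractional packing and would survive mild asymmetries in the host graph. Your parenthetical caveat about checking near-uniform degrees when the parts are only ``as equal as possible'' is the right thing to flag, and it does go through since the part sizes differ by at most one.
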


Next we show the correspondence between the famous Ruzsa-Szemerédi (6,3)-problem and a particular case of the multicolor Turán problem. The problem of Ruzsa and Szemerédi known as the   (6,3)-problem asks for the maximum number of edges in an $n$-vertex graph in which every edge belongs to a unique triangle. Almost equivalently, we want to determine the maximum number of hyperedges in a $3$-uniform  hypergraph on $n$ vertices in which no six vertices span three or more hyperedges.\footnote{Apart from a linear component in which triples can be assigned to disjoint triangles, some components on four vertices with two triplets may occur.} 
Observe that such an extremal hypergraph must be  linear for large enough $n$, and then the forbidden configuration is  the same as in the multicolor Turán problem for $F=G=K_3$. Namely, the configuration of three triplets on six vertices so that each pair of triplets shares a pairwise distinct common vertex.  Answering a question of Brown, Erdős and Sós, Ruzsa and Szemerédi obtained the following result in our terminology.

\begin{theorem}[Ruzsa, Szemerédi \cite{Ruzsa}]

$$n^2\exp(-O(\sqrt{\log{n}}))<\ex_{K_3}(n,K_3)=o(n^2).$$
\end{theorem}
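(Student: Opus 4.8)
The upper bound is immediate from Theorem~\ref{mainTHM}: the identity map is a homomorphism $K_3\to K_3$, so $\ex_{K_3}(n,K_3)=o(n^2)$. For completeness I also recall the classical Ruzsa--Szemer\'edi route in our terminology. Fix $k$ pairwise edge-disjoint triangles $T_1,\dots,T_k\subseteq K_n$ forming no multicolour $K_3$, and set $H=\bigcup_{i=1}^k T_i$, so $e(H)=3k$. The key reduction is that every triangle of $H$ is one of the $T_i$: by edge-disjointness each edge of $H$ lies in exactly one placed copy, so a triangle $abc$ of $H$ whose three edges came from three distinct copies would be a forbidden multicolour $K_3$; hence two of those copies coincide in a single copy $T$, which has only three vertices and therefore equals $\{a,b,c\}$, forcing $ca\in E(T)$ too, so $abc=T$. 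Thus $H$ has exactly $k$ triangles, pairwise edge-disjoint. Given $\varepsilon>0$, let $\delta=\delta(\varepsilon)>0$ be supplied by the triangle removal lemma; since $3k=e(H)\le\binom n2$ we have $k<n^2/6\le\delta n^3$ for large $n$, so $H$ can be made triangle-free by deleting at most $\varepsilon n^2$ edges --- yet destroying $k$ pairwise edge-disjoint triangles requires deleting at least $k$ edges, so $k\le\varepsilon n^2$, i.e.\ $k=o(n^2)$.

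For the lower bound the essential input is Behrend's construction: for each $m$ there is $B\subseteq\{1,\dots,m\}$ with $|B|\ge m\exp(-O(\sqrt{\log m}))$ containing no three-term arithmetic progression of nonzero common difference. Take disjoint vertex classes $X,Y,Z$ identified with $\{1,\dots,m\}$, $\{1,\dots,2m\}$, $\{1,\dots,3m\}$ (so $n=6m$), and for every $x\in\{1,\dots,m\}$ and $b\in B$ place the triangle $T_{x,b}$ on $x\in X$, $x+b\in Y$, $x+2b\in Z$; this is a family of $m|B|=n^2\exp(-O(\sqrt{\log n}))$ triangles. The family is edge-disjoint because any edge of $T_{x,b}$ joins two prescribed classes and its endpoints recover first $x$ and then $b$. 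It contains no multicolour $K_3$: the union graph is tripartite, so such a triangle has the form $xyz$ with $x\in X$, $y\in Y$, $z\in Z$, with $xy,yz,zx$ drawn from three distinct placed copies; the copy through $xy$ is then $T_{x,\,y-x}$, the copy through $yz$ has common difference $z-y$, and the copy through $zx$ has common difference $\tfrac{z-x}{2}$. Hence $y-x$, $\tfrac{z-x}{2}$, $z-y$ all lie in $B$ and (in this order) form a three-term arithmetic progression, which by Behrend's property must be constant: $y-x=z-y=\tfrac{z-x}{2}$. But then the copies through $xy$ and through $zx$ are both $T_{x,\,y-x}$, contradicting their distinctness. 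Rescaling the constant hidden in $O(\cdot)$ to absorb the factor $\tfrac1{36}$ and the gap between $\log n$ and $\log(n/6)$ gives the claimed lower bound.

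The only genuinely hard ingredient is the triangle removal lemma (equivalently, Szemer\'edi's regularity lemma) used in the upper bound --- or, if one prefers, it is already contained in the proof of Theorem~\ref{mainTHM}. Everything else is elementary: the reduction identifying the triangles of $H$ with the placed copies, and the Behrend-based construction together with its short linear-relation check. One minor caveat is the ``almost equivalence'' flagged in the footnote above --- an extremal configuration may carry a bounded number of small components (four vertices spanning two triples) that are not genuinely linear --- but these account for only $O(n)$ copies and hence affect neither the $o(n^2)$ statement nor the $n^2\exp(-O(\sqrt{\log n}))$ lower bound.
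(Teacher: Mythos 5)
Your proof is correct. Note that the paper does not prove this statement at all --- it is quoted from \cite{Ruzsa} as a known result (with a pointer to the literature for details) --- so any complete argument here is necessarily ``different from the paper''. What you supply is the standard Ruzsa--Szemer\'edi argument transplanted cleanly into the multicolor language, and both halves check out: the reduction showing that every triangle of the union graph $H$ is one of the placed copies (two edges from the same copy already span all three vertices) is exactly the right observation, after which the triangle removal lemma gives $k\le\varepsilon n^2$; and the Behrend construction with the $3$-AP check on the differences $y-x$, $\tfrac{z-x}{2}$, $z-y$ is the correct linear-relation verification for both edge-disjointness and the absence of a multicolour triangle. Your opening remark is also worth keeping: the upper bound is a literal special case of Theorem~\ref{mainTHM} (the identity is a homomorphism $K_3\to K_3$), so the paper's Theorem~\ref{szemreg} already subsumes the $o(n^2)$ half; the regularity/removal machinery is the common engine either way. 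The only content Theorem~\ref{mainTHM} does not give is the $n^2\exp(-O(\sqrt{\log n}))$ lower bound, for which your Behrend argument is the essential (and correctly executed) extra ingredient.
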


For further details of the proof and recent generalisations we refer to  \cite{Gowers-Janzer}.

The paper is organised as follows. In Section 2. we set up the main notations and prove some general preliminary results concerning the multicolor Turán function. We also introduce the key tools for the proof of Theorem \ref{mainTHM} which includes  the celebrated Regularity Lemma. We continue by proving Theorem \ref{mainTHM} and \ref{mainth} in Section 3. Then we establish some bounds for specific pairs of graphs in  Section 4. We point out and discuss the strong  relation between the so called linear (hypergraph) Turán problems and multicolor Turán numbers, namely that $\ex_r^{lin}(n,G)=\ex_{K_r}(n,G)$ holds, where  the left hand side denotes the linear Turán number of a Berge-$G$, i.e.  the maximum number of hyperedges in a  Berge-$G$-free $r$-uniform linear hypergraph on $n$ vertices. We conclude our paper by pointing out possible applications and posing some open problems.

\section{Preliminaries}

\subsection{Notations} In this paper we only consider simple graphs. For a graph $G$, $e(G)$, $v(G)$, $\alpha(G)$ and $\chi(G)$ will denote the number of edges, vertices, the size of the largest independent set and the chromatic number of $G$, respectively. The lower index denotes the number of vertices of a graph: $P_n$, $C_n$, $K_n$ denote the path, cycle and complete graph on $n$ vertices, $K_{a,b}$ the biclique on $a+b$ vertices and $T_{n,m}$ the Turán graph with $n$ vertices and $m$ classes. $\EX(n,G)$ denotes the set of graphs on $n$ vertices with $ex(n,G)$ edges which have no subgraph isomorphic to $G$.

For a graph $G=(V,E)$ we will use the notation $e(X,Y)$ for the number of edges between $X$ and $Y$, where $X$ and $Y$ are disjoint subsets of $V$. We denote the density (Definition~\ref{density}) of $X$ and $Y$ with $d(X,Y)$. We denote with $\deg_G(v)$ the degree of the vertex $v$.

Next we define the blow-up of a graph.
Let $R$ be a graph and $t$ a positive integer. Define $R[t]$ as the graph obtained from $R$ such that we replace every vertex $v$ of $R$ with $t$ vertices, call this set $A_v$, and $xy \in E(R[t])$ if and only if $uv \in E(R)$ where $x \in A_u$ and $y \in A_v$. We call $R[t]$ the $t$-blow-up of $R$.

We will need the theorem of  Haxell and Rödl in connection with graph packings  \cite{packing}. We use the same notations as in the mentioned article, which we summarize here.
Let $F$ and $G$ be two graphs. We denote with ${G \choose H}$ the set of all subgraphs of $G$ isomorphic to $H$. We call a subset $\mathcal{A}$ of ${G \choose H}$ an $H$-packing if any two graphs $A,B \in \mathcal{A}$ are edge-disjoint. Let $\nu_H(G)$ be the size of the largest $H$-packing. A fractional $H$-packing is a function $\psi: {G \choose H} \to [0,1]$ such that for each edge $e \in E(G)$ we have $\sum_{e \in H' \in {G \choose H}} \psi(H') \leq 1$. Let $|\psi|=\sum_{H' \in {G \choose H}} \psi(H')$. We say that a fractional $H$-packing is maximal if $|\psi|$ is maximal and  denote this maximum by $\nu_H^*(G).$

For discussing the possible application to a generalized Turán problem in section 5., we need the notion of $\ex(n,H,G)$ which denotes the maximum number of copies of $H$ in $G$-free graphs on $n$ vertices.

\subsection{Basic results}

Observe the following trivial monotonicity results of multicolor Turán numbers.

\begin{prop}\label{monotonicity}
We have monotonicity in $F$, $G$ and $n$ as follows.
\begin{enumerate}[(1)]
    \item If $F$ is a subgraph of $F'$ and $G$ is any graph then $$\ex_{F'}(n,G) \leq \ex_{F}(n,G).$$
    \item If $G_1$ is a subgraph of $G_2$ and $F$ is any graph then $$\ex_F(n,G_1) \leq \ex_F(n,G_2).$$
    \item If $F$ and $G$ are graphs and $n_1<n_2$ then $$\ex_F(n_1,G) \leq \ex_F(n_2,G).$$
\end{enumerate}
\end{prop}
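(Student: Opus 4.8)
The plan is to observe that all three inequalities are instances of a single monotonicity principle: a configuration realising the larger side of each inequality can be turned into, or directly reused as, a configuration realising the other side, and in each case the forbidden multicolor copy of $G$ (respectively $G_1$) is either preserved or reflected appropriately. I would treat the three parts separately, as the transformations are slightly different.

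For (1) I would start from $k=\ex_{F'}(n,G)$ pairwise edge-disjoint copies $F_1',\dots,F_k'$ of $F'$ in $K_n$ containing no multicolor $G$, and inside each $F_i'$ fix a copy $F_i$ of $F$ (possible since $F\subseteq F'$); the $F_i$ are again pairwise edge-disjoint. The key step is to check that this cannot create a multicolor $G$. Suppose $G\subseteq\bigcup_i F_i$ with $|E(G)\cap E(F_i)|\le 1$ for every $i$. Each edge of $G$ lies in a unique $F_j$ (by edge-disjointness of the $F_i$), hence in $F_j'$, and in no other $F_i'$ (by edge-disjointness of the $F_i'$). Therefore the edges of $G$ contained in $F_i'$ are exactly those contained in $F_i$, so $|E(G)\cap E(F_i')|=|E(G)\cap E(F_i)|\le 1$ for all $i$, contradicting the choice of the $F_i'$. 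Hence $\ex_F(n,G)\ge k$.

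For (2) I would argue the contrapositive. Given pairwise edge-disjoint copies $F_1,\dots,F_k$ of $F$ that contain a multicolor $G_2$, pick such a copy of $G_2$ inside $\bigcup_i F_i$ and restrict it to a sub-copy of $G_1$ (using $G_1\subseteq G_2$). Since the edge set of this $G_1$-copy is contained in that of the chosen $G_2$-copy, each $F_i$ still meets it in at most one edge, so the family already contains a multicolor $G_1$. Consequently every family of $F$-copies with no multicolor $G_1$ has no multicolor $G_2$ either, which yields $\ex_F(n,G_1)\le\ex_F(n,G_2)$.

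For (3) I would take $k=\ex_F(n_1,G)$ pairwise edge-disjoint copies of $F$ in $K_{n_1}$ with no multicolor $G$, fix an embedding $K_{n_1}\hookrightarrow K_{n_2}$, and transport the configuration; it remains a family of $k$ pairwise edge-disjoint copies of $F$, and any multicolor $G$ in $K_{n_2}$ would be contained in the image of $K_{n_1}$ and hence already present on the smaller ground set. Thus $\ex_F(n_2,G)\ge k$. I do not expect a genuine obstacle in any of the three parts; the only place any care is needed is the bookkeeping in part (1), where edge-disjointness of the $F_i'$ is precisely what guarantees that passing to sub-copies of $F$ does not introduce a new multicolor $G$.
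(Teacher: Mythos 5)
Your proof is correct; the paper states this proposition as a ``trivial'' observation without proof, and your arguments are exactly the intended ones (reuse or transport the extremal configuration, checking that a multicolor copy would persist). The only nontrivial bookkeeping is in part (1), and you handle it properly by using edge-disjointness of the $F_i'$ to show $E(G)\cap E(F_i')=E(G)\cap E(F_i)$.
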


Putting a single edge into $F'$ in (1) shows the connection of the multicolor Turán numbers and the original Turán numbers.

\begin{cor}
For any graphs $F$ and $G$ we have $$\ex_F(n,G)\leq\ex(n,G).$$
\end{cor}

\begin{prop} \label{simplelower}
$\ex_F(n,G)\geq\frac1{e(F)}\left(\ex(n,G)-\ex(n,F)\right)$.
\end{prop}

\begin{proof}
Take $H\in\EX(n,G)$, and pack a maximal amount of disjoint $F$-copies into it. Then there must be at most $\ex(n,F)$ many edges not in any of the copies, otherwise we could pack another $F$-copy. This gives a construction with at least the required number of $F$-copies.
\end{proof}

\subsection{Important Lemmas}

In this subsection we summarize the main lemmas and theorems we will apply.

We will need the following supersaturation result of Erdős and Simonovits \cite{E-S_supersat}.

\begin{theorem}[Erdős--Simonovits] \label{supersat}
Let $G$ be a  graph with chromatic number $\chi(G)=r$. Then for any $c>0$ there exists a $c'>0$ such that if $\mathcal{G}_{n}$ is a graph on $n$ vertices with $e(\mathcal{G}_{n})>\left(1-\frac{1}{r-1}+c\right)\displaystyle\binom{n}2$ then $\mathcal{G}_{n}$ contains at least $c'n^{v(G)}$ copies of $G$. 
\end{theorem}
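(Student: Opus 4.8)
The plan is to deduce this from the Erdős--Stone--Simonovits theorem by the standard averaging argument over small vertex sets. Write $r=\chi(G)$, $v=v(G)$, and $a=1-\tfrac1{r-1}$; we may assume $c<\tfrac1{r-1}$, since otherwise the hypothesis $e(\mathcal G_n)>(a+c)\binom n2$ forces $e(\mathcal G_n)>\binom n2$ and is vacuous. Since $\chi(G)=r$, the Erdős--Stone--Simonovits theorem gives $\ex(m,G)=(a+o(1))\binom m2$, so I would fix once and for all an integer $m=m(c,G)$ such that every graph on $m$ vertices with more than $\bigl(a+\tfrac c2\bigr)\binom m2$ edges contains a copy of $G$. (Equivalently, $\chi(G)=r$ implies $G\subseteq K_r[v]$, so this is Erdős--Stone applied to the complete $r$-partite graph $K_r[v]$.) This is the only external input; everything else is counting. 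We treat $n\ge m$; the finitely many smaller $n$ are handled by shrinking $c'$, the statement being understood for $n$ large in any case.

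Now count edges in a random $m$-set. For a uniformly random $m$-subset $S$ of $V(\mathcal G_n)$, each edge lies inside $S$ with probability $\binom{n-2}{m-2}/\binom nm=\binom m2/\binom n2$, so $\mathbb E\bigl[e(\mathcal G_n[S])\bigr]=e(\mathcal G_n)\binom m2/\binom n2>(a+c)\binom m2$. As $e(\mathcal G_n[S])\le\binom m2$ always, the elementary inequality $\mathbb E[X]\le t+(M-t)\Pr[X>t]$ for $0\le X\le M$, applied with $M=\binom m2$ and $t=\bigl(a+\tfrac c2\bigr)\binom m2$, gives $\Pr\bigl[e(\mathcal G_n[S])>t\bigr]\ge\frac{c/2}{1-a-c/2}\ge\frac c2$. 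Hence at least $\tfrac c2\binom nm$ of the $m$-subsets $S$ span more than $\bigl(a+\tfrac c2\bigr)\binom m2$ edges, and so, by the choice of $m$, contain a copy of $G$. Finally, double count pairs $(G',S)$ with $G'$ a copy of $G$ in $\mathcal G_n$, $|S|=m$, and $V(G')\subseteq S$: each of the $\ge\tfrac c2\binom nm$ ``good'' sets $S$ yields at least one such pair, while a fixed copy $G'$ lies in exactly $\binom{n-v}{m-v}$ sets $S$. Therefore the number of copies of $G$ in $\mathcal G_n$ is at least
$$\frac{(c/2)\binom nm}{\binom{n-v}{m-v}}=\frac c2\cdot\frac{n(n-1)\cdots(n-v+1)}{m(m-1)\cdots(m-v+1)}\ \ge\ c'\,n^{v}$$
for a suitable $c'=c'(c,G)>0$, as $m$ depends only on $c$ and $G$.

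There is no genuine obstacle once Erdős--Stone is granted --- the argument is pure averaging and counting --- but two points need care. First, $m$ must be chosen \emph{before} $n$, so that in the final display $\binom nm$ has degree $m$ in $n$ while $\binom{n-v}{m-v}$ has degree $m-v$, leaving exactly $n^{v}$ with a constant free of $n$; this is why a quantitative form of Erdős--Stone (a uniform threshold $m$) is what one really uses. Second, the small-$n$ bookkeeping: for $v\le n<m$ one simply notes the bound holds with a smaller $c'$ (or is vacuous). If one wished to avoid citing Erdős--Stone, one could instead establish supersaturation directly for complete multipartite host graphs --- for $K_r$ via the Kővári--Sós--Turán / Moon--Moser count together with an induction on $r$, then pass to $G$ using $G\subseteq K_r[v]$ --- but invoking Erdős--Stone is much the cleaner route.
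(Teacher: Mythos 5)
Your argument is correct, but note that the paper does not prove this statement at all: it is quoted as a known theorem of Erdős and Simonovits (the reference \cite{E-S_supersat}) and used as a black box in the proofs of Theorems \ref{mainth} and \ref{szemreg}. What you have written is the standard derivation of supersaturation from the Erdős--Stone--Simonovits theorem by averaging over $m$-subsets, and it checks out: the choice of a uniform threshold $m=m(c,G)$ before $n$, the expectation bound $\mathbb E[e(\mathcal G_n[S])]>(a+c)\binom m2$, the reverse Markov step giving a $\tfrac c2$-fraction of ``good'' $m$-sets, and the final double count producing $\binom nm/\binom{n-v}{m-v}=\Theta(n^{v})$ are all sound. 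The only caveat, which you already flag, is that the statement as printed is false for a few small $n$ (the hypothesis can be satisfiable while $\mathcal G_n$ is $G$-free, since $(1-\tfrac1{r-1}+c)\binom n2$ may be below $\ex(n,G)$ for small $n$); shrinking $c'$ does not repair that, so the theorem must be read, as it is throughout the paper, for $n$ sufficiently large in terms of $c$ and $G$.
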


We will also use a theorem due to Haxell and Rödl  \cite{packing}.

\begin{theorem}[Haxell--Rödl] \label{HR}
Let $H$ be a fixed graph and let an $\eta>0$ be given. Then there exists an integer $N$ such that for any graph $G$ on $n \geq N$ vertices we have
$$\nu_H^*(G)-\nu_H(G) \leq \eta n^2.$$
\end{theorem}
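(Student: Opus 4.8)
The plan is to regularize $G$, reduce the statement to a ``clean'' regular skeleton, recast the packing question as a matching problem in a bounded-rank auxiliary hypergraph, and round a near-optimal fractional matching to a near-optimal integral one by the semi-random (nibble) method.

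First, we may assume $e(G)>\eta n^2$: otherwise $\nu_H^*(G)\le e(G)/e(H)\le \eta n^2$ while $\nu_H(G)\ge 0$, and there is nothing to prove. Fix $d\ll\eta$ and then $\epsilon\ll d$, and apply the Regularity Lemma to obtain an equitable partition $V_0\cup V_1\cup\dots\cup V_k$ with $k\le M=M(\epsilon)$, $|V_0|\le\epsilon n$, parts of common size $m$, and all but at most $\epsilon k^2$ pairs $\epsilon$-regular. Delete every edge inside a part, every edge meeting $V_0$, every edge in an irregular pair, and every edge in a pair of density below $d$; the number of deleted edges is at most $\delta n^2$, where $\delta\to0$ as $d,\epsilon\to0$ (taking the partition sufficiently fine). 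Write $G'$ for the resulting graph. The point of this step is a soft monotonicity comparison: deleting a set $D$ of edges never increases $\nu_H$, and decreases $\nu_H^*$ by at most $|D|$, since in an optimal fractional packing $\psi$ of $G$ the copies meeting $D$ carry total weight $\sum_{e\in D}\sum_{H'\ni e}\psi(H')\le|D|$. Hence
$$\nu_H^*(G)-\nu_H(G)\ \le\ \bigl(\nu_H^*(G')-\nu_H(G')\bigr)+\delta n^2,$$
so it suffices to bound the integrality gap for $G'$, which is a vertex-disjoint union of $\epsilon$-regular pairs of density at least $d$ lying over a reduced graph $R$ on $k\le M$ vertices.

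For $G'$, write $h=v(H)$. Since $G'$ has no edges inside a part, each copy of $H$ in $G'$ induces a homomorphism $V(H)\to V(R)$ (adjacent vertices of the copy lie in distinct, $R$-adjacent parts), so the copies fall into at most $M^{h}$ \emph{types}. Let $\mathcal H$ be the hypergraph on vertex set $E(G')$ whose hyperedges are the edge sets of the copies of $H$ in $G'$; it has rank $e(H)$, and a maximum fractional $H$-packing $\psi^*$ of $G'$ is precisely a maximum fractional matching of $\mathcal H$, of value $\nu_H^*(G')$. The crude identity $e(H)\cdot\nu_H^*(G')=\sum_{e\in E(G')}\sum_{H'\ni e}\psi^*(H')\le e(G')\le n^2$ shows it is enough to produce an integral matching of $\mathcal H$ covering all but $o(n^2)$ of the edges that the fractional optimum essentially saturates. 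To build it one exploits pseudorandomness: within a fixed type a ``typical'' edge lies in $\Theta(m^{h-2})$ copies while any two edges lie together in only $O(m^{h-3})$ copies, so after discarding the few atypical edges the relevant part of $\mathcal H$ is, type by type, close to regular with codegrees of lower order; a R\"odl nibble / Pippenger--Spencer near-perfect matching argument then extracts edge-disjoint copies of $H$ covering all but an $o(1)$ fraction of those edges. Combining over the (boundedly many) types and feeding the bound back through the display above yields $\nu_H^*(G)-\nu_H(G)\le\eta n^2$ for all $n\ge N(\eta,H)$.

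The main obstacle is this rounding step, and its subtlety is that a dense regular pair can be ``bottlenecked'' by a sparse partner: for $H=K_3$ over parts $(A_1,A_2,A_3)$ with $d(A_1,A_2)=d(A_1,A_3)=1$ but $d(A_2,A_3)=d$ small, only a $d$-fraction of the edges between $A_1$ and $A_2$ can ever lie in a triangle, so $\mathcal H$ is genuinely far from regular and one cannot ask the nibble to cover almost all of $E(G')$. The remedy is to work relative to the fractional optimum rather than to raw edge counts: one runs the nibble against a ``spread out'' near-optimal fractional $H$-packing whose fractional codegrees $\sum_{H'\supseteq\{e,e'\}}\psi(H')$ are uniformly small, and such a packing exists because a regular blow-up is approximately vertex-transitive within each part, so weight can be redistributed nearly uniformly over the $\Theta(m^{h-2})$ copies through a given edge. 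Carrying out this redistribution and the attendant concentration estimates uniformly over all of the finitely many reduced graphs on at most $M$ vertices --- and keeping track of how small $\epsilon$ must be chosen in terms of $d$, and $d$ in terms of $\eta$, for the nibble to succeed --- is the part that needs genuine care.
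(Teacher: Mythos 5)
This statement is quoted by the paper as an external black box (Theorem of Haxell and R\"odl, \cite{packing}); the paper gives no proof of it, so there is no internal argument to compare against. Your sketch does follow the architecture of the actual Haxell--R\"odl proof: regularize, discard the sparse/irregular/internal edges at a cost of $\delta n^2$ to both $\nu_H^*$ and $\nu_H$ (your monotonicity comparison there is correct), classify copies of $H$ by their type over the reduced graph, pass to the auxiliary bounded-rank hypergraph on $E(G')$, and round by a semi-random matching argument. You also correctly identify the real obstruction (the auxiliary hypergraph is far from degree-regular, so one cannot aim to cover almost all edges) and the correct fix (work relative to a fractional optimum with small fractional codegrees).

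The gap is that essentially the entire mathematical content of the theorem is concentrated in the step you describe in one sentence and do not execute. Two things are asserted without proof. First, the existence of a near-optimal fractional $H$-packing of $G'$ whose fractional codegrees $\sum_{H'\supseteq\{e,e'\}}\psi(H')$ are uniformly $o(1)$: an $\varepsilon$-regular pair is only \emph{approximately} transitive, so ``redistribute the weight of $e$ uniformly over the $\Theta(m^{h-2})$ copies through $e$'' requires a genuine averaging argument showing that the redistributed function still satisfies the packing constraints up to a $(1+o(1))$ factor at \emph{every} edge, including the atypical ones; this is where most of the work in \cite{packing} lives. Second, the statement that a nibble ``run against'' such a fractional packing returns an \emph{integral} packing of size $(1-o(1))|\psi|$ is itself a nontrivial theorem (a weighted Pippenger--Spencer/Kahn-type result for bounded-rank hypergraphs with small codegrees), which must either be quoted in a precise citable form or proven; note also that the quantity to be preserved is the fractional weight $|\psi|$, not the number of covered edges, so the reduction to whichever nibble statement you invoke has to be set up in those terms. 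As it stands the proposal is a correct road map of the known proof rather than a proof.
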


The Szemerédi regularity lemma will be  one of the most important tools in our proof. We use the notations and theorems from \cite{Komlos} but we state the most relevant definitions and results that we will use. 

\begin{defi}[Density]\label{density}
Let $\mathcal{G}=(V,E)$ be a graph and $X,Y \subset V$ are disjoint subsets. We define the density of $X$ and $Y$ with the following formula.
$$d(X,Y)=\frac{e(X,Y)}{|X| \cdot |Y|}.$$
\end{defi}


\begin{defi}[Regularity condition]
Let $\varepsilon>0$. Given a graph $\mathcal{G}=(V,E)$ and $A,B \subset V$ are disjoint subsets of the vertex set. We say that the pair $(A,B)$ is $\varepsilon$-regular, if for every $X \subset A$ and $Y \subset B$ with $|X|>\varepsilon |A|$ and $|Y|>\varepsilon |B|$ we have
$$|d(X,Y)-d(A,B)|<\varepsilon.$$
\end{defi}

Now we are ready to state the famous Szemerédi regularity lemma.  

\begin{theorem}[Degree form of Szemerédi regularity lemma]
For all $\varepsilon>0$ there exists an $M$ such that for all graph $\mathcal{G}$ and $0 \leq d \leq 1$ there is a partition of $V(\mathcal{G})$ to $V_0, V_1, \ldots, V_k$ and there exists a $\mathcal{G}' \subset \mathcal{G}$ with the following properties. 
\begin{enumerate}[(1)]
    \item $k \leq M,$ 
\item $|V_0| \leq \varepsilon |V|,$
$|V_i|=m$ for all $1 \leq i \leq k$ for some constant $m$, 
\item $\deg_{\mathcal{G}'}(v)>\deg_G(v)-(d+\varepsilon)|V|$ for all vertex $v \in V(\mathcal{G})$, 
\item $e(\mathcal{G}'(V_i))=0$ for all $i \geq 1$, 
and for all pair $1 \leq i<j \leq k$ $\mathcal{G}'(V_i,V_j)$ is $\varepsilon$-regular with density $0$ or at least $d$. 
\end{enumerate}
\end{theorem}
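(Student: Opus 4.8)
\emph{Proof sketch (plan).} The plan is to derive the degree form from the standard form of the regularity lemma (the version producing an $\varepsilon'$-regular equitable partition, with the number of non-exceptional classes allowed to be prescribed from below; this is the form treated in \cite{Komlos}). Write $n=|V(\mathcal{G})|$. First I would fix a small auxiliary parameter $\varepsilon'=\varepsilon'(\varepsilon)>0$, to be pinned down at the end, and apply the standard lemma to $\mathcal{G}$ with regularity parameter $\varepsilon'$ and with lower bound $1/\varepsilon'$ on the number of non-exceptional classes; I then let $M$ be the upper bound on the number of classes that this instance of the standard lemma provides. This yields a partition $V(\mathcal{G})=V_0\cup V_1\cup\dots\cup V_k$ (disjoint), with an exceptional set satisfying $|V_0|\le\varepsilon' n$, with classes of common size $m$ and $1/\varepsilon'\le k\le M$ (so $m\le n/k\le\varepsilon' n$), and with at most $\varepsilon' k^2$ of the pairs $(V_i,V_j)$ not $\varepsilon'$-regular.

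Next I would form $\mathcal{G}'\subseteq\mathcal{G}$ by deleting every edge that either (i) lies inside some class $V_i$, $i\ge 1$; or (ii) meets $V_0$; or (iii) lies in an irregular pair $(V_i,V_j)$; or (iv) lies in an $\varepsilon'$-regular pair $(V_i,V_j)$ of density below $d+2\varepsilon'$. With this definition, conditions (1), (2) and (4) are essentially immediate: $k\le M$; no edge survives inside a class; and each surviving pair is either empty (hence trivially $\varepsilon$-regular of density $0$) or a restriction of an $\varepsilon'$-regular pair of density at least $d+2\varepsilon'$, which after the mild shrinking of the classes described below remains $\varepsilon$-regular of density exceeding $d$ (the lost $\varepsilon'$-factors being absorbed by taking $\varepsilon'\le\varepsilon/3$). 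Everything therefore reduces to the per-vertex degree estimate (3).

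The delicate point, and the one I expect to be the main obstacle, is precisely that (3) must hold for \emph{every} vertex, whereas a crude edge-count only bounds the \emph{average} loss: a vertex all of whose neighbours happen to lie in one single irregular or sparse pair would be ruined by the naive construction. To handle this I would clean up the partition before forming $\mathcal{G}'$. Since there are at most $\varepsilon' k^2$ irregular pairs, at most $2\sqrt{\varepsilon'}\,k$ classes lie in more than $\sqrt{\varepsilon'}\,k$ of them; I move all vertices of these classes into $V_0$, so every remaining $v$ loses at most $\sqrt{\varepsilon'}\,k\cdot m=\sqrt{\varepsilon'}\,n$ edges of type (iii). Since sparse pairs are deleted only when \emph{regular}, $\varepsilon'$-regularity of $(V_i,V_j)$ with density below $d+2\varepsilon'$ forces all but at most $\varepsilon' m$ vertices of $V_i$ to have fewer than $(d+3\varepsilon')m$ neighbours in $V_j$; summing over the at most $k$ relevant classes $V_j$, all but at most $\sqrt{\varepsilon'}\,m$ vertices of each $V_i$ lose at most $(d+3\varepsilon')n+\sqrt{\varepsilon'}\,n$ edges of type (iv), and I move the remaining exceptional vertices into $V_0$ as well. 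Then $V_0$ has grown by only $O(\sqrt{\varepsilon'})\,n$, so $|V_0|<\varepsilon n$ for $\varepsilon'$ small, and for every $v$ outside the enlarged $V_0$,
$$\deg_{\mathcal{G}}(v)-\deg_{\mathcal{G}'}(v)\;\le\;2\varepsilon' n\;+\;\sqrt{\varepsilon'}\,n\;+\;\big((d+3\varepsilon')n+\sqrt{\varepsilon'}\,n\big)\;<\;(d+\varepsilon)\,n,$$
once $\varepsilon'$ is chosen small enough in terms of $\varepsilon$ (the first summand bounds types (i)--(ii), the second type (iii), the third type (iv)). Finally I would move $O(\sqrt{\varepsilon'}\,m)+1$ further vertices out of each surviving class into $V_0$ to restore a common class size; this perturbs $|V_0|$ and all degrees by a further $O(\sqrt{\varepsilon'})\,n+k$, absorbed into the same inequalities when $n$ is large (for the finitely many small $n$ the conclusion is trivial using singleton classes and $V_0=\emptyset$), and restricting the surviving regular pairs to the shrunken classes keeps them $\varepsilon$-regular with density above $d$ as noted, completing the verification.
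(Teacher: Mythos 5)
This theorem is quoted in the paper from Komlós--Simonovits \cite{Komlos} as a known tool; the paper gives no proof of it, so the only thing to measure your derivation against is the statement itself. Your plan --- apply the standard regularity lemma with a smaller parameter $\varepsilon'$, delete edges inside classes, in irregular pairs and in sparse regular pairs, then enlarge $V_0$ by the few vertices that would lose too much --- is the standard derivation, and you correctly identified the one delicate point: the per-vertex bound (3), for which naive averaging over the deleted edges fails. Your use of $\varepsilon'$-regularity to show that in a sparse regular pair all but $\varepsilon' m$ vertices of $V_i$ have few neighbours in $V_j$, followed by the two Markov-type counting steps, is exactly the right way to handle it.

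Two steps as written do fail, though both are repairable. First, your deletion rule (ii) removes every edge meeting $V_0$, so every vertex of the (enlarged) $V_0$ ends up with $\deg_{\mathcal{G}'}(v)=0$; but condition (3) is required for \emph{all} $v\in V(\mathcal{G})$, and it is violated by any vertex of $V_0$ with $\deg_{\mathcal{G}}(v)\ge(d+\varepsilon)n$ --- indeed you only verify (3) ``for every $v$ outside the enlarged $V_0$.'' The correct construction keeps all edges incident to the final $V_0$; this is harmless, since condition (4) only constrains pairs $(V_i,V_j)$ with $i,j\ge1$, and it makes (3) trivial for vertices of $V_0$ (they lose nothing), while for the remaining vertices your computation still goes through (the $|V_0|$-term it replaces is $O(\sqrt{\varepsilon'})n$ in either case). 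Second, your density threshold $d+2\varepsilon'$ for retaining a regular pair is too tight: the cleanup removes up to a $\Theta(\sqrt{\varepsilon'})$ fraction of each class, which can decrease the density of a retained pair by $\Theta(\sqrt{\varepsilon'})\gg\varepsilon'$ and hence push it below $d$, so condition (4) is not guaranteed as claimed. Retaining only pairs of density at least, say, $d+4\sqrt{\varepsilon'}$ (or $d+\varepsilon/2$) fixes this without affecting the degree estimate.
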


It is easy to see (and it is also in \cite{Komlos}) that as a consequence we get that if we define $\mathcal{G}''=\mathcal{G}'-V_0$ then $e(\mathcal{G}'')>e(\mathcal{G})-(d+3\varepsilon)\frac{n^2}{2}$.

We will also apply the so-called key lemma which often appears when the regularity lemma is used.

\begin{lemma}[Key lemma]
Given $d>\varepsilon>0$, a graph $R$ and an integer $m>0$. Let $\mathcal{G}$ be a graph obtained from $R$ such that we replace every vertex $v$ with a set $A_v$ of $m$ vertices and for each edge $uv \in E(R)$ choose the edges such that $(A_u, A_v)$ is $\varepsilon$-regular with density at least $d$, and there are no more edges of $\mathcal{G}$, so $\mathcal{G} \subset R[m]$ where $R[m]$ is the $m$-blow-up of $R$. \\
Let $H$ be a graph of maximal degree $\Delta>0$ for which $H \subset R[t]$ for some constant $t$. Let 
$$\varepsilon_0=\frac{(d-\varepsilon){^\Delta}}{2+\Delta}.$$
Assume that $\varepsilon \leq \varepsilon_0$ and $t-1 \leq \varepsilon_0 m$. Then the number of subgraphs isomorphic to $H$ in $\mathcal{G}$ is at least
$(\varepsilon_0 m)^{v(H)}$.
\end{lemma}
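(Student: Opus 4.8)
The plan is to prove the Key lemma by the standard greedy ``embed one vertex of $H$ at a time'' argument. Since $H\subset R[t]$, fixing such an embedding and composing with the projection $R[t]\to R$ yields a graph homomorphism $\phi\colon V(H)\to V(R)$ with $|\phi^{-1}(v)|\le t$ for every $v\in V(R)$; in particular, in any copy of $H$ inside $\mathcal{G}\subset R[m]$ the vertex $x\in V(H)$ must sit in the class $A_{\phi(x)}$. So I would enumerate $V(H)=\{x_1,\dots,x_h\}$ with $h=v(H)$ in an arbitrary order and build an injective edge-preserving map $\psi\colon V(H)\to V(\mathcal{G})$ with $\psi(x_i)\in A_{\phi(x_i)}$ by choosing $\psi(x_1),\psi(x_2),\dots$ one after another. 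To keep track of the available room, for every not-yet-embedded $x_j$ I maintain a \emph{candidate set} $C_j\subseteq A_{\phi(x_j)}$ consisting of the vertices adjacent in $\mathcal{G}$ to $\psi(x_i)$ for all already-embedded neighbours $x_i$ of $x_j$; initially $C_j=A_{\phi(x_j)}$, so $|C_j|=m$.

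The only input from regularity is the routine fact: if $(X,Y)$ is $\varepsilon$-regular of density at least $d$ and $Y'\subseteq Y$ with $|Y'|>\varepsilon|Y|$, then all but at most $\varepsilon|X|$ vertices $x\in X$ satisfy $|N(x)\cap Y'|\ge(d-\varepsilon)|Y'|$ --- otherwise the exceptional set would be a subset of $X$ of size exceeding $\varepsilon|X|$ whose density with $Y'$ is below $d-\varepsilon\le d(X,Y)-\varepsilon$, contradicting $\varepsilon$-regularity. Applying this with $X=A_{\phi(x_i)}$, $Y=A_{\phi(x_j)}$, $Y'=C_j$ at the moment $x_i$ is embedded: as long as $|C_j|>\varepsilon m$, all but at most $\varepsilon m$ choices of $\psi(x_i)\in A_{\phi(x_i)}$ merely shrink $C_j$ by a factor $\ge d-\varepsilon$ instead of emptying it. Hence if at step $i$ we pick $\psi(x_i)$ inside $C_i$, avoiding the at most $t-1$ already-used vertices of $A_{\phi(x_i)}$ and the at most $\Delta$ ``bad'' sets of size $\le\varepsilon m$ coming from the not-yet-embedded neighbours of $x_i$, then $\psi$ stays a valid partial embedding and the invariant $|C_j|\ge(d-\varepsilon)^{\Delta}m$ (in fact $|C_j|\ge(d-\varepsilon)^{r}m$, where $r\le\Delta$ is the current number of embedded neighbours of $x_j$, using $0<d-\varepsilon<1$) is preserved for all remaining $x_j$.

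It then remains to count. Because $\varepsilon\le\varepsilon_0=(d-\varepsilon)^{\Delta}/(\Delta+2)<(d-\varepsilon)^{\Delta}$, the invariant $|C_i|\ge(d-\varepsilon)^{\Delta}m$ indeed gives $|C_i|>\varepsilon m$, so the regularity step applies at every stage. Therefore at step $i$ the number of admissible choices for $\psi(x_i)$ is at least
\[
(d-\varepsilon)^{\Delta}m-\Delta\varepsilon m-(t-1)\ \ge\ (d-\varepsilon)^{\Delta}m-(\Delta+1)\varepsilon_0 m\ =\ \Bigl(1-\frac{\Delta+1}{\Delta+2}\Bigr)(d-\varepsilon)^{\Delta}m\ =\ \varepsilon_0 m,
\]
where we used $\varepsilon\le\varepsilon_0$ and $t-1\le\varepsilon_0 m$. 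Since there are $h=v(H)$ steps and distinct sequences of choices give distinct maps $\psi$, the procedure produces at least $(\varepsilon_0 m)^{v(H)}$ distinct injective edge-preserving maps $V(H)\to V(\mathcal{G})$, i.e.\ at least $(\varepsilon_0 m)^{v(H)}$ copies of $H$ in $\mathcal{G}$.

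The whole proof is bookkeeping, so there is no real ``hard part''; the one place that must be handled with care is the calibration of $\varepsilon_0$. The exponent $\Delta$ is forced because a vertex's candidate set can be cut down once for each of its (at most $\Delta$) neighbours, and the denominator $\Delta+2$ is chosen exactly so that (i) $\varepsilon_0<(d-\varepsilon)^{\Delta}$, which legitimises invoking $\varepsilon$-regularity on the candidate sets (they must stay larger than $\varepsilon m$), and (ii) subtracting the $\le\Delta\varepsilon m$ forbidden and $\le t-1$ used vertices still leaves $\ge\varepsilon_0 m$ valid choices; both use only the hypotheses $\varepsilon\le\varepsilon_0$ and $t-1\le\varepsilon_0 m$. (If one prefers to count subgraphs isomorphic to $H$ rather than labelled copies, divide by $|\mathrm{Aut}(H)|$; this constant factor is irrelevant for our applications, where a single copy already suffices.)
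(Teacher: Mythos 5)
Your proof is correct and is the standard greedy vertex-by-vertex embedding argument; the paper itself does not prove the Key Lemma but quotes it from the Komlós--Simonovits survey \cite{Komlos}, where essentially this same argument appears, and your calibration of $\varepsilon_0$ (losing $\Delta\varepsilon m$ to bad vertices, $t-1$ to used vertices, and keeping candidate sets above $\varepsilon m$) checks out. The only caveat, which you already flag, is that the argument counts labelled embeddings, so the bound on the number of subgraphs isomorphic to $H$ strictly acquires a harmless $1/|\mathrm{Aut}(H)|$ factor --- an imprecision already present in the paper's restatement of the lemma and irrelevant to its application.
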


\section{Proof of the main theorems}

\begin{proof} [Proof of Theorem~\ref{mainth}] We wish to show that if $\chi(F)<\chi(G)$ then
$$\ex_F(n,G) \sim \frac{\ex(n,G)}{e(F)} \sim \frac{1-\frac{1}{\chi(G)-1}}{2e(F)}n^2.$$
Note first that the second asymptotic  equivalence holds by the Erdős-Stone-Simonovits theorem \cite{Erdos-Stone}, so we only need to prove the first one.

The idea of the proof is to take an extremal construction  from $\EX(n,G)$. For the lower bound we do an $F$-packing on this extremal graph while we use supersaturation to prove the upper bound.

We begin with the lower bound.
Let $r=\chi(G)$. Let $\varepsilon>0$ be given. From the asymptotic structure theorem of Erdős and Simonovits \cite{survey} we know that the Turán graph $T_{n,r-1}$ is close (in edit distance) to the extremal graph  on $n$ vertices without  $G$ as a subgraph. If the multipartite graph $T_{n,r-1}$ does not have equal size partition classes then we delete one vertex from each class which has more than $\frac{n}{r-1}$ vertices, together with the incident edges.

 Let us denote this graph with $\mathcal{G}_{n'}$. We deleted less than $n(r-1)$ edges, so $e(\mathcal{G}_{n'})>\ex(n,G)-~\varepsilon n^2$ holds when $n$ is large enough by the Erdős-Stone-Simonovits theorem \cite{Erdos-Stone}.

Now we want to prove that we can almost pack $\mathcal{G}_{n'}$ with $F$-copies, for which we will use Theorem~\ref{HR}. For a maximal fractional $F$-packing into $\mathcal{G}_{n'}$, consider all subgraphs of $\mathcal{G}_{n'}$ isomorphic to $F$.  Such subgraph exists for $n>(r-1)v(F)$ as $\chi(F) \leq r-1$. By the symmetry of $\mathcal{G}_{n'}$, any edge of $\mathcal{G}_{n'}$ appears in the same number of $F$-copies, call this amount $m$. Setting $\psi=\frac{1}{m}$ for all $F$-copies gives $\nu_F^*(\mathcal{G}_{n'})=\frac{e(\mathcal{G}_{n'})}{e(F)}$ as it clearly cannot be larger.

By the Haxell-Rödl theorem (Theorem~\ref{HR}) there exists a threshold such that whenever $n$ is large enough, there is an $F$-packing into $\mathcal{G}_{n'}$ of size at least $\nu_F^*(\mathcal{G}_{n'})-\varepsilon n^2$. Observe that this packing gives a construction in which there is no multicolor $G$-copy as there is not any $G$-copy in it. So for any large enough $n$ we have

$$\ex_F(n,G)\geq\nu_F^*(\mathcal{G}_{n'})-\varepsilon n^2=\frac{e(\mathcal{G}_{n'})}{e(F)}-\varepsilon n^2 \geq \frac{\ex(n,G)}{e(F)}-2\varepsilon n^2.$$
This implies $\ex_F(n,G)\geq\frac{\ex(n,G)}{e(F)}-o(n^2)$ as $n\to\infty$.

Now we turn to prove the upper bound.
Let $k=\ex_F(n,G)$ and assume that $\mathcal{G}_{n}$ is an extremal construction built up by $k$ pieces of disjoint $F$-copies. Suppose by contradiction that there exists a $c>0$ such that for large $n$ we have $e(\mathcal{G}_{n})=k\cdot e(F)>\left(1-\frac{1}{r-1}+c\right)\displaystyle\binom{n}{2}$. Using  Theorem~\ref{supersat} on supersaturation we get that there exists a $c'>0$ such that $\mathcal{G}_{n}$ contains at least $c'n^{v(G)}$ subgraphs isomorphic to $G$.

Let us count the ordered pairs of ($H$,$\{e,f\}$), where $H$ is a subgraph of $\mathcal{G}_{n}$ isomorphic to $G$ and $\{e,f\}$ are monochromatic edges in $H$. Call the number of such pairs $l$. By assumption each subgraph isomorphic to $G$ in $\mathcal{G}_{n}$ has at least two edges of the same color, so $l \geq c'n^{v(G)}$.

On the other hand, there are $k \displaystyle\binom{e(F)}{2}$ monochromatic edge pairs, and each such pair determines at least $3$ vertices, hence such an edge pair may be in at most $n^{v(G)-3}$ subgraphs isomorphic to $G$ in $\mathcal{G}_{n}$. Thus $l \leq k \displaystyle\binom{e(F)}2n^{v(G)-3}$. Combining the two estimations we get
$$c'n^3 \leq k \displaystyle\binom{e(F)}2.$$
Observing $n^2\geq e(\mathcal{G}_{n})=ke(F)$, we have 
$$c'n^3 \leq \frac{n^2}{e(F)}\displaystyle\binom{e(F)}2,$$
which is a contradiction for large $n$.

Thus $\ex_F(n,G)\leq\displaystyle\frac{1-\frac1{r-1}}{2e(F)}n^2+o(n^2)$ as $n\to\infty$ which finishes the proof.
\end{proof}



We prove Theorem~\ref{mainTHM} in two separate statements.

\begin{theorem}\label{haxell}
Suppose that $F$ and $G$ are graphs such that there is no homomorphism from $G$ to $F$. Then
$$\ex_F(n,G) \geq \frac{1}{v(F)^2}n^2-o(n^2).$$
\end{theorem}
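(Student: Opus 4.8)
The plan is to construct many edge-disjoint $F$-copies on $n$ vertices with \emph{no} multicolor $G$ by making the union of \emph{every} collection of colored $F$-copies have a homomorphism to $F$; since there is no homomorphism from $G$ to $F$, such a union cannot contain $G$ as a subgraph, hence no multicolor $G$ can appear. Concretely, I would take a proper vertex-partition trick: pick an integer $t$ and consider the complete $v(F)$-partite graph on $n$ vertices with parts $A_1,\dots,A_{v(F)}$ of size $\lfloor n/v(F)\rfloor$, where the parts are indexed by the vertices of $F$ (so that pairs $A_iA_j$ with $ij \in E(F)$ will host the edges, but actually we want \emph{all} pairs available so that each $F$-copy can be laid down as a blow-up of $F$ using one vertex in each part corresponding to a vertex of $F$). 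The key point: if every $F$-copy is placed so that its vertex labelled $u \in V(F)$ lands in $A_u$, then the union of any set of such copies maps homomorphically to $F$ via the projection $A_u \mapsto u$. So it remains only to pack enough such "aligned" $F$-copies edge-disjointly into the $v(F)$-partite host graph.

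\textbf{Packing step.} Restricting to $F$-copies of this aligned type, an edge $xy$ with $x \in A_u$, $y \in A_v$ is usable only when $uv \in E(F)$, and an aligned $F$-copy is exactly a choice of one vertex $\phi(u) \in A_u$ for each $u$, using the edge set $\{\phi(u)\phi(v) : uv \in E(F)\}$. This is the same as packing edge-disjoint copies of $F$ into the blow-up $F[m]$ with $m = \lfloor n/v(F)\rfloor$, respecting the part structure. For the lower bound I would use a counting / greedy argument: there are $m^{v(F)}$ aligned copies, the host has at most $\binom{v(F)}{2} m^2$ edges (in fact $e(F) m^2$ usable edges), and each aligned copy uses $e(F)$ edges, so a maximal edge-disjoint family of aligned copies must have size at least $\frac{e(F) m^2}{e(F)}\cdot\frac{1}{\text{(overlap factor)}}$; more carefully, when a family is maximal, every aligned copy shares an edge with some chosen copy, and each chosen copy "blocks" at most $e(F)\cdot e(F)\cdot m^{v(F)-2}$ aligned copies (choose a blocked edge, choose which edge-slot of the new copy it occupies, choose the remaining $v(F)-2$ vertices), giving a maximal family of size at least $\frac{m^{v(F)}}{e(F)^2 m^{v(F)-2}} = \frac{m^2}{e(F)^2} \geq \frac{m^2}{v(F)^2}$ (using $e(F) \le v(F)$... which is false in general, so the clean bound $\frac{1}{v(F)^2}n^2$ in the statement suggests the intended count is slightly different). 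To land exactly on $\frac{1}{v(F)^2}n^2 - o(n^2)$ I would instead invoke Theorem~\ref{HR} (Haxell--Rödl) on the host graph $\mathcal{H} = F[m]$: the fractional relaxation gives $\nu_F^*(\mathcal{H}) \ge \frac{e_{\mathrm{aligned}}(\mathcal{H})}{e(F)}$ where $e_{\mathrm{aligned}}(\mathcal{H}) = e(F) m^2$, hence $\nu_F^*(\mathcal{H}) \ge m^2$, and then $\nu_F(\mathcal{H}) \ge m^2 - \eta n^2$, i.e. at least $\frac{1}{v(F)^2}n^2 - o(n^2)$ edge-disjoint aligned copies. (One must check the fractional packing claim: by the symmetry of $F[m]$ every usable edge lies in the same number of aligned $F$-copies, so assigning each aligned copy weight $1/(\text{that number})$ is feasible and has total weight exactly $e(F) m^2 / e(F) = m^2$.)

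\textbf{Why this forbids multicolor $G$.} Color each aligned $F$-copy with its own color. Suppose a multicolor $G$ appeared: its edges come from distinct aligned copies, but it is a subgraph of the union $\bigcup_i F_i \subseteq \mathcal{H} \subseteq F[m]$. The projection $\pi : V(\mathcal{H}) \to V(F)$, $A_u \ni x \mapsto u$, is a graph homomorphism from $F[m]$ to $F$ (an edge of $F[m]$ joins $A_u, A_v$ only if $uv \in E(F)$), so its restriction gives a homomorphism from the copy of $G$ to $F$ — contradicting the hypothesis that no homomorphism $G \to F$ exists. Hence the construction is multicolor-$G$-free, and $\ex_F(n,G) \ge \nu_F(\mathcal{H}) \ge \frac{1}{v(F)^2}n^2 - o(n^2)$.

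\textbf{Main obstacle.} The substantive point is getting the constant right: naively packing aligned copies greedily gives $\sim n^2 / e(F)^2$ which can be worse than $n^2/v(F)^2$ when $F$ is dense. The fix is the symmetry-plus-fractional-packing argument above feeding into Haxell--Rödl, which yields the clean $m^2 = (n/v(F))^2$ bound; verifying the edge-transitivity of $F[m]$ under the relevant automorphisms (so that the uniform fractional packing is feasible) and the rounding via Theorem~\ref{HR} are the steps that need care, though each is routine. Everything else — that alignment forces a homomorphism to $F$, and that this blocks $G$ — is immediate from the definitions.
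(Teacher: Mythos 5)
Your proposal is correct and follows essentially the same route as the paper: use the blow-up $F[m]$ with $m=\lfloor n/v(F)\rfloor$ as the host (which contains no copy of $G$ precisely because the projection to $F$ is a homomorphism and no homomorphism $G\to F$ exists), exhibit the uniform fractional packing on the "one vertex per class" copies of value $m^2$, and round via Haxell--Rödl. The only cosmetic difference is that you phrase the obstruction via the projection homomorphism rather than directly saying $G\not\subseteq F[m]$, and you needn't restrict the integer packing to aligned copies (any $F$-packing into $F[m]$ works, which is what Haxell--Rödl actually returns); neither point affects correctness.
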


\begin{proof}
Take  the $m$-blow-up $F[m]$ of $F$. Since there is no homomorphism from $G$ to $F$, this graph does not contain a subgraph isomorphic to $G$. We want a large $F$-packing into $F[m]$, for which we apply Theorem~\ref{HR} again. It is easy to give a maximal fractional $F$-packing for $F[m]$ as follows. Let $\mathcal{A}$ be the set of subgraphs spanned by one vertex from each classes of $F[m]$. Clearly, all subgraphs of $\mathcal{A}$ are isomorphic to $F$ and any edge is contained in the same number of graphs from $\mathcal{A}$. So we can put the same weight into each subgraphs from $\mathcal{A}$ and $0$ to every other subgraph such that the sum is exactly one in each edge. Consequently
$$\nu_F^*(F[m])=\frac{e(F[m])}{e(F)}=m^2.$$
From Theorem~\ref{HR} we have $\nu_F(F[m])=m^2-o((m\cdot v(F))^2)$.
In conclusion, if $v(F) \mid n$ then 
$$\ex_F(n,G) \geq \nu_F\left(F\left[\frac{n}{v(F)}\right]\right)=\frac{n^2}{v(F)^2}-o(n^2)=\Theta(n^2),$$
and by the monotonicity of $\ex_F(n,G)$, the claim follows.
\end{proof}



\begin{theorem}\label{szemreg}
Suppose that there exists a homomorphism from $G$ to $F$. Then $\ex_F(n,G)=o(n^2)$.
\end{theorem}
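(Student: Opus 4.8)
The plan is to run a cleaning/regularity argument in the spirit of the proof of the Ruzsa--Szemer\'edi theorem, the new twist being that one must locate a copy of $G$ that is genuinely \emph{multicolour}. Suppose for contradiction that $\ex_F(n,G)\ge cn^2$ for some fixed $c>0$ and arbitrarily large $n$, witnessed by edge-disjoint copies $F_1,\dots,F_k$ with $k\ge cn^2$ whose union $\mathcal G_n$ contains no multicolour $G$; fix a homomorphism $\varphi\colon G\to F$, so that $G\subseteq F[v(G)]$. First I would apply the degree form of the regularity lemma to $\mathcal G_n$ with $\varepsilon$ and $d$ chosen small in terms of $c$, $v(F)$, $e(F)$ and $\Delta(G)$ --- in particular so that $d+3\varepsilon<c$ and $\varepsilon\le\varepsilon_0:=(d-\varepsilon)^{\Delta(G)}/(2+\Delta(G))$ --- obtaining clusters $V_1,\dots,V_\ell$ of common size $m\to\infty$ and a cleaned graph $\mathcal G''=\mathcal G'-V_0$ with only inter-cluster edges, every pair $\varepsilon$-regular of density $0$ or $\ge d$, and $e(\mathcal G_n)-e(\mathcal G'')<(d+3\varepsilon)\frac{n^2}{2}<cn^2\le k$.

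The next step is to read off structure in the reduced graph $R$ (vertices $V_1,\dots,V_\ell$, edges the $\varepsilon$-regular pairs of density $\ge d$). Since each deleted edge belongs to exactly one $F_i$, fewer than $k$ of the copies lose an edge, so at least one copy $F_{i_0}$ survives intact inside $\mathcal G''$. As intra-cluster edges were deleted, every edge of $F_{i_0}$ joins two distinct clusters with density $\ge d$ between them, so sending each vertex of $F_{i_0}$ to its cluster is a graph homomorphism $F\to R$; writing $F'\subseteq R$ for its image and composing with $\varphi$ gives a homomorphism $G\to F'$, hence $G\subseteq F'[v(G)]$. Note that $F'$ spans at most $v(F)$ clusters.

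I would then apply the Key Lemma with $R\leftarrow F'$, $H\leftarrow G$ and $t\leftarrow v(G)$ to the subgraph of $\mathcal G''$ spanned by the clusters of $F'$ and their $F'$-edges; the hypotheses $\varepsilon\le\varepsilon_0$ and $v(G)-1\le\varepsilon_0 m$ hold for $n$ large, so $\mathcal G''$ contains at least $(\varepsilon_0 m)^{v(G)}$ copies of $G$, all inside those at most $v(F)$ clusters. Now colour each edge of $\mathcal G_n$ by the index of the $F_i$ containing it; a multicolour $G$ is exactly a copy of $G$ whose edges receive pairwise distinct colours, so by hypothesis each of these $(\varepsilon_0 m)^{v(G)}$ copies contains two edges of the same colour, i.e.\ two edges lying in a common $F_i$. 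But the clusters of $F'$ span at most $\binom{v(F)}{2}m^2$ edges of $\mathcal G_n$, hence at most that many colour classes occur there, giving at most $\binom{v(F)}{2}\binom{e(F)}{2}m^2$ monochromatic edge-pairs in total; and any fixed pair of edges spans at least three vertices, so lies in $O(m^{v(G)-3})$ of our copies of $G$ (the implied constant depending only on $v(F)$ and $v(G)$). Hence the number of copies found is $O(m^{v(G)-1})$, contradicting the bound $(\varepsilon_0 m)^{v(G)}$ once $m$, equivalently $n$, is large enough. Since $c>0$ was arbitrary, this yields $\ex_F(n,G)=o(n^2)$.

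I expect the genuinely delicate point to be the last one: regularity and the Key Lemma readily produce $\Theta(m^{v(G)})$ copies of $G$, and the real content is that one of them must be multicolour. This succeeds precisely because every colour class has the fixed size $e(F)$, so the non-multicolour copies are confined to a lower-order term. A secondary subtlety worth spelling out is that $\mathcal G_n$ may be far sparser than the Tur\'an density of $G$ (indeed $\chi(G)\le\chi(F)$ here), so $R$ itself need not contain a copy of $G$ or even fail to be bipartite; this is rescued by passing through a single intact $F$-copy, which plants a homomorphic image of $F$, and therefore of $G$, inside $R$.
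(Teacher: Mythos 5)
Your proposal is correct and follows essentially the same route as the paper: apply the degree form of the regularity lemma, note that fewer than $k$ edges are deleted so some $F$-copy survives intact and plants a homomorphic image of $F$ (hence of $G$) in the reduced graph, invoke the key lemma to produce $\Theta(m^{v(G)})$ copies of $G$, and double-count monochromatic edge pairs (each spanning at least three vertices, with only $\binom{e(F)}{2}$ pairs per colour) to reach a contradiction. The only cosmetic differences are that you restrict the key lemma to the image $F'$ of the surviving copy and carry out the final count locally in those $v(F)$ clusters, whereas the paper uses the full reduced graph and counts globally over all $n$ vertices; both are valid.
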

\begin{proof}

 Let $t=v(G)$. To obtain a  contradiction, suppose  that there exists a $c>0$ such that for all $n_0$ there is a graph $\mathcal{G}_{n}$ on $n$ vertices with $n>n_0$, with at least $cn^2$ edges and $\mathcal{G}_{n}$ can be obtained as disjoint union of $F$-copies with no multicolor $G$. We will prove that there exists a $c'>0$ such that $\mathcal{G}_{n}$ contains at least $c'n^t$ subgraphs isomorphic to $G$. It is enough to prove this as it leads to a  contradiction in the same way we proceed in the proof of the upper bound of Theorem~\ref{mainth}.

Observe that there exists a homomorphism from $G$ to $F$ exactly if $G \subset F[s]$ for some $s>0$ and that in this case $G \subset F[t]$ also holds. We want to use the regularity lemma and the key lemma, so first we choose $d$ and $\varepsilon$ for it. Let $d=\frac{c}{e(F)}$. For  $\varepsilon$ there are $3$ conditions:
\begin{enumerate}[(1)]
    \item $0<\varepsilon<d$.
    \item $\varepsilon \leq \frac{(d-\varepsilon)^{\Delta}}{2+\Delta}$. \\
  Here $\Delta=\Delta(G)$ is the maximum degree of $G$. Note that
    $$\lim_{\varepsilon \to 0^+} \frac{(d-\varepsilon)^{\Delta}}{2+\Delta}=\frac{d^{\Delta}}{2+\Delta}>0,$$
    so for small enough $\varepsilon$ this is true.
    \item $\varepsilon<\frac{c}{3e(F)}$.\\
\end{enumerate}

Choose a small enough $\varepsilon$ such that all $3$ of the above conditions hold. Define 
$$\varepsilon_0=\frac{(d-\varepsilon){^\Delta}}{2+\Delta}$$
as in the key lemma. Let $M$ be the constant defined in the statement of the regularity lemma for $\varepsilon$. Let 
$$n_0=\frac{(t-1)M}{(1-\varepsilon)\varepsilon_0}.$$
We will see that this choice guarantees the condition $t-1 \leq \varepsilon_0 m$ in the key lemma. Let $\mathcal{G}_{n}$ be a graph as above with $n>n_0$ vertices. Use the degree form of the Szemerédi regularity lemma on $\mathcal{G}_{n}$ and use the same notations, so let $V_0, V_1, \ldots, V_k$ be the given partition with $|V_i|=m$ for $i \geq 1$, $\mathcal{G'}$ is the obtained subgraph and $\mathcal{G'}'=\mathcal{G'}\setminus V_0$. Let $R$ be the so-called reduced graph, the vertices are $V_1, V_2, \ldots, V_k$ and two of them are connected if the density between them in $\mathcal{G'}$ is positive. In particular we have $\mathcal{G'}' \subset R(m)$.

Note that by the choice of $d$ and $\varepsilon$ we have $\frac{d+3\varepsilon}{2}<\frac{c}{e(F)}$ and by the consequence stated after the regularity lemma $e(\mathcal{G'}')>e(\mathcal{G}_{n})-\frac{(d+3\varepsilon)n^2}{2}$, furthermore we know that $e(\mathcal{G}_{n})\geq cn^2$. Combining these
$$e(\mathcal{G'}')>e(\mathcal{G}_{n})-\frac{(d+3\varepsilon)n^2}{2}>e(\mathcal{G}_{n})-\frac{c}{e(F)}n^2 \geq \frac{e(F)-1}{e(F)}e(\mathcal{G}_{n}).$$
This means that at least one of the $F$-copies are entirely in $\mathcal{G'}'$. In particular there is homomorphism form $F$ to $R$. We know that there exists a homomorphism form $G$ to $F$, so by transitivity there is a homomorphism from $G$ to $R$, so $G \subset R[t]$. Observe that $\mathcal{G'}'$ is a graph that can be obtained from $R$ as we obtained $G$ from $R$ in the key lemma.

We only need to check that $t-1 \leq \varepsilon_0 m$ to be able to use the key lemma.
We know that $k \leq M$ and $|V_0| \leq \varepsilon n$ so
$$m=\frac{n-|V_0|}{k} \geq \frac{(1-\varepsilon)n}{M},$$
so using the choice of $n_0$ we have
$$t-1=\frac{n_0(1-\varepsilon)\varepsilon_0}{M} <\varepsilon_0 \cdot \frac{n(1-\varepsilon)}{M} \leq \varepsilon_0 m.$$
Every condition of the key lemma holds, so there are at least 
$$(\varepsilon_0 m)^t \geq \left( \frac{(1-\varepsilon)n\varepsilon_0}{M} \right)^t$$
subgraphs of $\mathcal{G'}'$ isomorphic to $G$, so there are at least this amount of $G$-s in $\mathcal{G}_{n}$, therefore the theorem is proved with $c'=\left( \frac{(1-\varepsilon)\varepsilon_0}{M} \right)^t$.

\end{proof}

\begin{proof}[Proof of Theorem~\ref{mainTHM}]
 Theorem~\ref{mainTHM} clearly follows from Theorem \ref{haxell} and \ref{szemreg}.
\end{proof}

\section{Results on specific graph pairs}

The cases when $F$ or $G$  is a complete graph, a biclique, a path or a cycle may be of interest as more precise results can be obtained. First we consider the case when $G$ is a star $K_{1,s}$ and $F$ is an arbitrary graph. Observe that the exact formula depends only on the number of vertices of $F$, and independent of the edge structure.

\begin{prop}\label{csillag}
If $n$ is large enough, then
$$\ex_F(n,K_{1,s})=\left\lfloor \frac{n(s-1)}{v(F)} \right\rfloor.$$
\end{prop}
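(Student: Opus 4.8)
The plan is to prove matching upper and lower bounds on the number of edge-disjoint $F$-copies avoiding a multicolor $K_{1,s}$. A multicolor $K_{1,s}$ is simply a vertex $v$ incident to $s$ edges coming from $s$ distinct $F$-copies. So a family of edge-disjoint $F$-copies is admissible precisely when at every vertex $v$, the number of $F$-copies using at least one edge at $v$ is at most $s-1$. Equivalently, colour each $F$-copy with its own colour; then at each vertex at most $s-1$ colours may appear.

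First I would prove the upper bound $\ex_F(n,K_{1,s}) \le \lfloor n(s-1)/v(F)\rfloor$ by a counting argument on incidences. Suppose we have $k$ edge-disjoint $F$-copies $F_1,\dots,F_k$ forming no multicolor $K_{1,s}$. Consider pairs $(v, F_i)$ with $v \in V(F_i)$. On one hand, since $F$ has no isolated vertices, every vertex of $F_i$ is incident to an edge of $F_i$, so each $F_i$ contributes exactly $v(F)$ such pairs, giving $k\cdot v(F)$ in total. On the other hand, for a fixed vertex $v$, the copies $F_i$ containing $v$ each use an edge at $v$ (again, no isolated vertices), and these edges lie in distinct colours, so the no-multicolor-$K_{1,s}$ condition forces at most $s-1$ such copies. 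Summing over the $n$ vertices gives $k\cdot v(F) \le n(s-1)$, hence $k \le \lfloor n(s-1)/v(F)\rfloor$.

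For the lower bound I would exhibit a construction achieving this value for $n$ large. Partition (most of) the $n$ vertices into $\lfloor n/v(F)\rfloor$ groups of size $v(F)$ and place a copy of $F$ on each group; this already gives $\lfloor n/v(F)\rfloor$ copies with every vertex in only one copy. To reach roughly $(s-1)$ times this many, the idea is to take $s-1$ "rotated" layers of such near-perfect $F$-packings of $K_n$ that are pairwise edge-disjoint and such that, across the layers, each vertex is covered a bounded number of times — in fact one wants each vertex to lie in at most $s-1$ copies total. The cleanest way is to use a near-$F$-decomposition of $K_n$: by standard results (e.g. the existence of $F$-designs / near-$F$-decompositions of $K_n$ for $n$ large, or simply the fact that $K_n$ has an $F$-packing covering all but $o(n^2)$ edges combined with a more careful regular-covering argument), one can find an $F$-packing in which every vertex is in essentially $\deg_{K_n}$-many / $v(F)$ ... hmm — more directly: take a partition of $V(K_n)$ into parts and take a union of $s-1$ vertex-disjoint-within-each-layer packings so that the multiset of vertex-coverings is as balanced as possible, namely each vertex covered at most $s-1$ times, while the number of copies is $\lfloor n(s-1)/v(F)\rfloor$; here the edge-disjointness across layers must be arranged, and for $n$ large this is possible by choosing the $s-1$ layers to be "shifts" of one another along a suitable group structure on the vertex set. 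This matches the upper bound since a vertex covered $\le s-1$ times can have at most $s-1$ colours at it.

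The main obstacle is the lower bound construction: one must simultaneously guarantee (i) the $F$-copies are pairwise edge-disjoint, (ii) each vertex lies in at most $s-1$ copies, and (iii) the total count hits the exact floor $\lfloor n(s-1)/v(F)\rfloor$ rather than merely $(s-1)\lfloor n/v(F)\rfloor$ or an asymptotic version. Achieving the exact floor for all large $n$ requires handling the divisibility/remainder of $n$ modulo $v(F)$ and the remainder of $n(s-1)$ modulo $v(F)$ with care; I expect to build $s-1$ almost-disjoint near-packings on overlapping vertex sets and then argue that edge-disjointness can be enforced by choosing the packings to live on edge-disjoint subgraphs of $K_n$ (feasible since $s-1$ is a constant and $K_n$ has $\binom n2 \gg n$ edges), which is exactly where "$n$ large enough" enters.
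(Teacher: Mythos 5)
Your upper bound is correct and is essentially the paper's (one-line) argument: each of the $k$ copies contributes $v(F)$ vertex--copy incidences, and no vertex can lie in $s$ of the copies, since their $s$ pairwise distinct edges at that vertex would form a multicolor $K_{1,s}$; hence $k\,v(F)\le n(s-1)$.

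The lower bound is where the entire content of the proposition lies, and your proposal does not actually contain a construction: it names the two difficulties (edge-disjointness across the $s-1$ ``layers'', and reaching the exact value $\lfloor n(s-1)/v(F)\rfloor$ rather than $(s-1)\lfloor n/v(F)\rfloor$) and then defers both. The ``shifts along a group structure'' idea fails as stated: if one layer partitions the vertices into blocks of size $v(F)$ and the next layer is a shift of it, two copies from different layers can share up to $v(F)-1$ vertices and hence edges, whichever way $F$ is placed on the blocks. What one actually needs is a family of $v(F)$-element vertex sets such that every vertex lies in exactly $s-1$ of them and \emph{any two sets meet in at most one vertex}; then edge-disjointness is automatic regardless of how $F$ is drawn on each set. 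The paper obtains exactly this from a grid: take $V=[v(F)]^{s-1}$ and let the sets be the axis-parallel lines, giving a $v(F)$-uniform, $(s-1)$-regular, linear hypergraph with precisely $n(s-1)/v(F)$ hyperedges. For general large $n$ it then takes many disjoint copies of this gadget and performs an explicit patching step (deleting some $F$-copies and inserting new ones through the $r$ leftover vertices) whose incidence count lands exactly on $\lfloor n(s-1)/v(F)\rfloor$; this remainder bookkeeping is precisely the part you flagged but did not carry out. As written, your argument yields at best $(s-1)\lfloor n/v(F)\rfloor$, and even that rests on an unproved packing claim.
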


\begin{proof}
For $n=v(F)^{s-1}$  a $v(F)$-uniform $(s-1)$-regular hypergraph on $n$ vertices can be defined as follows. Let $V=\{(a_1,\ldots,a_{s-1}): 1\leq a_i\leq v(F) ~\forall i\in [1,  s-1]\}$ and let $v(F)$  vertices form a hyperedge if all but one of their coordinates coincide. Putting an $F$-copy on the underlying set of each hyperedge gives the statement for this value of $n$.

For $n=m\cdot v(F)^{s-1}+r$ with $m\geq v(F)^{s-1}(s-1)$ and $v(F)^{s-1}>r\geq0$, we add $m$ (disjoint) copies of the above construction to $m\cdot v(F)^{s-1}$ vertices, then delete an $F$-copy from $r(s-1)$  constructions. This leaves us with $r$ vertices appearing in no $F$-copies, $r(s-1)v(F)$  vertices in $(s-2)$ $F$-copies and the rest of the vertices in $(s-1)$ $F$-copies. Then we add $F$ copies so that each of them has exactly one of the $r$ vertices and $v(F)-1$ of the other vertices. Thus all vertices will be in $(s-1)$ $F$-copies, except for $r(s-1)$ of them being in $(s-2)$ $F$-copies. We can add arbitrary (vertex-disjoint) $F$-copies to those until less than $v(F)$ remain. This construction gives the lower bound for $n\geq v(F)^{2s-2}(s-1)$.

Conversely, if there is a vertex with $s$ colours meeting there, then there is a multicolor $K_{1,s}$. Thus in a construction for the multicolor Turán problem for $K_{1,s}$, every vertex is in at most $(s-1)$ $F$-copies, which proves that the given bound is sharp.
\end{proof}

Proposition \ref{simplelower}, which gives a simple lower bound in terms of $\ex(n,F)$ and $\ex(n,G)$, is meaningless if $\ex(n,F)>\ex(n,G)$. Thus it is worth noting some examples concerning the case $G=P_t$ in such circumstances.
A natural generalisation of the extremal structure due to Erdős and Gallai, which is based on disjoint small cliques, seems a good candidate as an extremal structure for some graphs  $F$ while very different structures provide better bounds for other graphs $F$.


\begin{prop}\label{ut} Let $F$ be a nonempty graph. Then we have
\begin{enumerate}[(i)]
    \item  $\ex_{F}(n, P_3)\sim \frac{n}{v(F)}$
    \item  $\ex_{F}(n, P_4)\geq \left\lfloor \frac{n-\alpha(F)}{v(F)-\alpha(F)} \right\rfloor.$
    \item Let $q$ be the smallest number such that $K_q$ has $t-2$ edge-disjoint  copies of $F$. Then  $\ex_{F}(n, P_t)\geq \lfloor\frac{n}{q}\rfloor(t-2)$.
\end{enumerate}

\end{prop}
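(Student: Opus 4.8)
The plan is to prove the three lower bounds in Proposition~\ref{ut} by explicit constructions, in each case building an edge-disjoint family of $F$-copies on $K_n$ whose union contains no path $P_t$ at all (which in particular forbids a multicolor $P_t$).

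\textbf{Part (i).} For the path $P_3$, a construction is multicolor-$P_3$-free precisely when no vertex lies in two different $F$-copies sharing that vertex as an internal vertex of a path of length two; but since two distinct edge-disjoint $F$-copies sharing a vertex automatically create a $P_3$ through that vertex (each contributes an edge at it), the $F$-copies must be pairwise \emph{vertex-disjoint}. Hence $\ex_F(n,P_3)\le\lfloor n/v(F)\rfloor$, and conversely packing $\lfloor n/v(F)\rfloor$ vertex-disjoint copies of $F$ gives a matching bound, so $\ex_F(n,P_3)=\lfloor n/v(F)\rfloor\sim n/v(F)$. This part is essentially immediate.

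\textbf{Part (ii).} For $P_4$ the idea is to allow two $F$-copies to overlap, but only in an independent set of $F$, so that no $P_4$ is formed. Take an independent set $I$ of size $\alpha(F)$ in $F$; think of $F$ as $I$ plus a ``core'' of $v(F)-\alpha(F)$ vertices. The plan is to build a sequence of $F$-copies $F_1,F_2,\dots$ where consecutive copies share exactly the vertex set playing the role of $I$ (so the shared vertices have all their edges going ``outward'' into disjoint cores), arranged so that the shared vertices never become the middle of a $P_4$: since in each $F_j$ the set $I$ is independent, any path of length three using an edge of $F_j$ at a vertex of $I$ must leave $F_j$ through the core, and by disjointness of cores it cannot re-enter another copy to extend to length three. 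Counting vertices: the first copy uses $v(F)$ vertices, each subsequent copy reuses $\alpha(F)$ and adds $v(F)-\alpha(F)$ new ones, giving $\lfloor (n-\alpha(F))/(v(F)-\alpha(F))\rfloor$ copies, which is the claimed bound. The subtlety to get right is checking that the union genuinely contains no $P_4$; one must verify that a path cannot oscillate between the shared independent set and two different cores, and this is where I would spend the most care (picking the gluing so the independent vertices form a common pool shared by all copies, or a chain, and arguing the ``reduced'' structure is a star of cores around $I$, which has no $P_4$).

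\textbf{Part (iii).} Let $q$ be minimal with $\nu_F(K_q)\ge t-2$, i.e.\ $K_q$ contains $t-2$ pairwise edge-disjoint copies of $F$. Partition (most of) the $n$ vertices into $\lfloor n/q\rfloor$ groups of size $q$, and on each group place $t-2$ edge-disjoint $F$-copies using the witnessing packing in $K_q$. Since the groups are vertex-disjoint, any path in the union stays within a single group of $q$ vertices, hence has at most $q-1$ edges; but we must also ensure it has fewer than $t-1$ edges — actually the relevant bound is on \emph{the existence of a multicolor $P_t$}, and within one group all edges come from only $t-2$ colors, so a $P_t$ (which has $t-1$ edges) cannot be rainbow/multicolor because by pigeonhole two of its $t-1$ edges share a color. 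This counts $\lfloor n/q\rfloor(t-2)$ copies of $F$, giving the bound. Here the only thing to check carefully is that ``multicolor'' means all edges from distinct $F$-copies, so $t-1$ edges drawn from $t-2$ color classes indeed cannot be multicolor; the construction then works verbatim. The main obstacle across all three parts is the verification in (ii) that the overlapping-along-an-independent-set construction creates no $P_4$, since the other two parts reduce to standard vertex-disjoint packing arguments.
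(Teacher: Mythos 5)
Your constructions coincide with the paper's in all three parts: (i) is the $s=2$ case of Proposition~\ref{csillag} (which the paper simply cites; you reprove it directly and correctly, including the matching upper bound), and (iii) is exactly the paper's construction, where your pigeonhole observation --- $t-1$ edges drawn from only $t-2$ colour classes cannot all have distinct colours --- is precisely the verification the paper leaves as ``clearly''. The one genuine problem is in part (ii), and it lies in the verification rather than the construction. Your announced strategy, building a family ``whose union contains no path $P_t$ at all'', cannot work there: each $F$-copy is a copy of $F$, so whenever $F$ itself contains a path on four vertices (e.g.\ $F=P_4$ or $F=C_5$) the union certainly contains a $P_4$. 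Your sketch that a path ``cannot re-enter another copy'' is also false: with a common independent set $I$ and pairwise disjoint cores, the path $v_1v_2v_3v_4$ with $v_1,v_2$ in one core, $v_3\in I$ and $v_4$ in another core is a genuine $P_4$ using edges of two different copies. What is true, and what must be proved, is that no $P_4$ uses three edges from three \emph{distinct} copies. The clean argument: every edge of the construction either lies inside a core or joins a core to $I$ (since $I$ plays the role of an independent set in every copy, there are no edges inside $I$), so every vertex outside $I$ belongs to exactly one core and all edges incident to it come from that single copy. The middle edge $v_2v_3$ of any $P_4$ therefore has an endpoint, say $v_2$, lying in some core, and then the two path edges at $v_2$ both belong to that core's copy, so two of the three edges share a colour and the $P_4$ is not multicolor. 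With this two-line fix part (ii) is complete and agrees with the paper's construction (the paper itself omits the verification entirely).
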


\begin{proof}
 $\ex_{F}(n, P_3)$ was discussed in Proposition \ref{csillag}. \\
 For $(ii)$, identify a maximal independent set $S$ of vertices of $F$ and consider $F$-copies whose vertex sets' intersection is the $S$  in each copy of $F$.\\
Finally to prove (iii), take $\lfloor\frac{n}{q}\rfloor$ sets of size $q$ and choose $t-2$ edge-disjoint copies of $F$ is each set. This clearly provides a construction without a multicolor $P_t$.
\end{proof}

If we apply (iii) of Proposition \ref{ut} for $F=K_r$ and $F=P_r$ and take into account the path decomposition result of Tarsi \cite{Tarsi} we get in turn the corollary below.

\begin{cor} Suppose that  $r\geq t-3$. Then
$$\ex_{K_r}(n, P_t)\geq (t-2)\left\lfloor\frac{n}{(t-2)(r-(t-1)/2+1)}\right\rfloor.$$

Suppose that  $r,t\geq 3$. Then
$$\ex_{P_r}(n, P_t)\geq
\begin{cases}
    (t-2)\left\lfloor\frac{n}{r}\right\rfloor, & \text{if } r/2> t-2\\
    (t-2)\left\lfloor\frac{n}{\lceil\sqrt{2(t-2)(r-1)}+1\rceil}\right\rfloor ,              & \text{otherwise}.
\end{cases}
$$
\end{cor}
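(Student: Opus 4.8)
The plan is to derive this corollary directly from part (iii) of Proposition~\ref{ut} by choosing the parameter $q$ optimally using known path-decomposition results. Recall that (iii) states $\ex_F(n,P_t)\geq\lfloor n/q\rfloor(t-2)$ where $q$ is the smallest integer for which $K_q$ contains $t-2$ edge-disjoint copies of $F$. So the whole task reduces to bounding this threshold $q$ from above in the two cases $F=K_r$ and $F=P_r$, and then substituting.

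For the first statement, take $F=K_r$. I would invoke a decomposition result for complete graphs into cliques: one needs the smallest $q$ such that $K_q$ decomposes into (or at least contains) $t-2$ edge-disjoint copies of $K_r$. The clean choice here comes from taking $q=(t-2)(r-(t-1)/2+1)$ when $r\geq t-3$, since then $K_q$ can be partitioned into $t-2$ cliques of size $r$ sharing only along a common structure — more precisely, one arranges $t-2$ cliques so that consecutive ones overlap appropriately, and a counting check $\binom{q}{2}\geq (t-2)\binom{r}{2}$ together with an explicit greedy/sequential construction gives edge-disjointness. Plugging $q=(t-2)(r-(t-1)/2+1)$ into $\lfloor n/q\rfloor(t-2)$ yields exactly the claimed bound. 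I should double-check that the arithmetic $q=(t-2)r-(t-2)(t-3)/2$ matches an actual edge-disjoint packing rather than just the counting lower bound; this is where I'd need to be careful, but for $r\geq t-3$ the slack is generous.

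For the second statement, take $F=P_r$, a path on $r$ vertices, hence with $r-1$ edges. Here I would apply Tarsi's theorem \cite{Tarsi} on decompositions of complete graphs into paths: $K_q$ decomposes into copies of $P_r$ (equivalently into $(r-1)$-edge paths) essentially whenever the obvious divisibility/size conditions are met, specifically once $\binom{q}{2}$ is large enough relative to $r-1$ and parity works out. I split into two regimes. When $r/2>t-2$, a single $K_r$ (which trivially contains $P_r$) is almost enough, but we need $t-2$ edge-disjoint copies; since $P_r$ has $r-1$ edges and $\binom{r}{2}=(r-1)r/2$, we have $\binom{r}{2}/(r-1)=r/2>t-2$, so $K_r$ itself admits $t-2$ edge-disjoint copies of $P_r$ by Tarsi, giving $q=r$ and the bound $(t-2)\lfloor n/r\rfloor$. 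In the complementary regime we instead solve $\binom{q}{2}\geq (t-2)(r-1)$ for the smallest $q$, which gives $q\approx\sqrt{2(t-2)(r-1)}$; taking $q=\lceil\sqrt{2(t-2)(r-1)}+1\rceil$ provides enough room (including for any parity correction) for Tarsi's decomposition to supply the $t-2$ edge-disjoint $P_r$'s, yielding the stated second branch.

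The main obstacle I anticipate is not the substitution but verifying that the chosen values of $q$ genuinely support $t-2$ \emph{edge-disjoint} copies of $F$, not merely satisfy the edge-count inequality $\binom{q}{2}\geq (t-2)e(F)$. For $F=P_r$ this is exactly the content of Tarsi's path-decomposition theorem, so it suffices to confirm the precise hypotheses of that theorem are met by my $q$ (size and parity conditions), possibly at the cost of the ceiling and the ``$+1$'' absorbing a parity adjustment — which is why the statement uses $\lceil\sqrt{2(t-2)(r-1)}+1\rceil$ rather than a bare square root. For $F=K_r$ I would rely on a standard packing/covering design argument (or an explicit iterated construction of overlapping cliques) under the hypothesis $r\geq t-3$, which guarantees enough overlap room; if a fully clean reference is wanted, one can cite near-optimal $K_r$-packings of $K_q$ and check the defect is absorbed. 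Everything else — the two case splits, the floor manipulations — is routine once these structural inputs are in hand.
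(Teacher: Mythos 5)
Your proposal is correct and follows the paper's route exactly: the corollary is just Proposition (iii) with $F=K_r$ and $F=P_r$, plus Tarsi's path-decomposition/packing theorem to certify the value of $q$ in the path case, and your two regimes for $P_r$ (Hamiltonian-path packing of $K_r$ when $r/2>t-2$, and $q=\lceil\sqrt{2(t-2)(r-1)}+1\rceil$ from $\binom{q}{2}\geq(t-2)(r-1)$ otherwise) are precisely what is intended. The one point you flagged but left open resolves cleanly: the value $q=(t-2)r-\binom{t-2}{2}$ does not come from edge counting but from the explicit construction in which the $i$-th clique consists of $r-(i-1)$ fresh vertices together with one vertex chosen from each of the $i-1$ earlier cliques (all distinct), so that any two cliques meet in exactly one vertex and are hence edge-disjoint; summing $\sum_{i=1}^{t-2}(r-i+1)$ gives the stated $q$, and the requirement $r\geq i-1$ for every $i\leq t-2$ is exactly the hypothesis $r\geq t-3$.
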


\subsection{Connection to linear Turán number of r-uniform hypergraphs}

A hypergraph $\mathcal{H}$ is a Berge copy of a graph $G$, or Berge-$G$ for short, if  we can choose a subset of size two of each hyperedge of $\mathcal{H}$ to obtain a copy of $G$. A hypergraph $\mathcal{H}$ is {\it Berge-$G$-free} if it does not contain a subhypergraph which is a Berge copy of $G$. Determining the the maximum number of hyperedges $\ex_r(n,G)$ in $r$-uniform hypergraphs which are { Berge-$G$-free} is an intensively studied variant of the original Turán-type problems, see e.g. \cite{GerbnerPatkos, GyoriL, Tait} and the references therein. A natural variant, which is also of considerable interest, is the case when the maximum number of hyperedges $\ex_r^{lin}(n,G)$ is determined in the family of {\em linear} $r$-uniform  hypergraphs, where the intersection of the hyperedges is of size at most one. In what follows, we describe its  connection to multicolor Turán problems and present the consequences of the  most notable results on  $\ex_r^{lin}(n,G)$ for various graphs $G$.

The problem of finding the maximal number of monocolored complete graphs on $r$ vertices in a rainbow-$G$-free multicolor $K_r$-packing is equivalent to finding the maximal number of hyperedges in an $r$-uniform linear hypergraph with no Berge-$G$. So we have

\begin{prop}
$$\ex_{K_r}(n,G)=\ex_r^{lin}(n,G).$$ 
\end{prop}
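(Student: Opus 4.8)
The plan is to establish a bijective correspondence between the two extremal objects. Given an $r$-uniform linear hypergraph $\mathcal{H}$ on $n$ vertices with no Berge-$G$, replace each hyperedge by a copy of $K_r$ on its underlying vertex set, and color all edges of that clique with a color unique to that hyperedge. Because $\mathcal{H}$ is linear, any two hyperedges share at most one vertex, so the corresponding cliques are edge-disjoint; hence this produces a multicolor $K_r$-packing in $K_n$ with exactly $|E(\mathcal{H})|$ copies. Conversely, from a multicolor $K_r$-packing $F_1,\dots,F_k$ in $K_n$ one forms the $r$-uniform hypergraph whose hyperedges are the vertex sets $V(F_1),\dots,V(F_k)$; edge-disjointness of the $F_i$ forces any two of these $r$-sets to intersect in at most one vertex (two common vertices would give a shared edge), so the hypergraph is linear with $k$ hyperedges. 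Thus the two counting problems have the same feasible sizes, and it remains to check that the forbidden configurations match up.

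The heart of the argument is the equivalence ``$\bigcup_i F_i$ contains a multicolor (rainbow) $G$'' $\iff$ ``$\mathcal{H}$ contains a Berge-$G$''. Suppose $\bigcup_i F_i$ contains a copy of $G$ with vertex set $\{w_1,\dots,w_{v(G)}\}$ whose edges all receive distinct colors; say the edge $w_aw_b$ lies in $F_{i(a,b)}$, with the indices $i(a,b)$ pairwise distinct over the edges of $G$. Then the hyperedges $V(F_{i(a,b)})$ are distinct, each contains the pair $\{w_a,w_b\}$, and choosing that pair as the designated $2$-subset of the hyperedge exhibits a Berge-$G$ on the vertices $w_1,\dots,w_{v(G)}$. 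Conversely, a Berge-$G$ in $\mathcal{H}$ consists of distinct hyperedges $h_e$ (for $e \in E(G)$) and a core set of vertices realizing $G$, with $e \subseteq h_e$; in the clique picture, the edge $e$ of the core lies in the clique associated to $h_e$, and since the $h_e$ are distinct these cliques carry distinct colors, so the core $G$ is rainbow. This shows the two forbidden structures correspond under the bijection, so $\ex_{K_r}(n,G) = \ex_r^{lin}(n,G)$.

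The main subtlety — really the only place any care is needed — is the role of linearity, which is exactly what makes the clique-coloring faithful in both directions: it guarantees edge-disjointness of the cliques on one side, and on the other side edge-disjointness of the $F_i$ is precisely linearity of the hypergraph. One should also note the harmless degenerate cases (isolated vertices, or hyperedges forming a Berge-$G$ using fewer than all of $G$'s structure) are handled by the standing assumption that $F$ and $G$ have no isolated vertices, and by the fact that a Berge-$G$ only needs the designated pairs to span a copy of $G$, matching exactly the rainbow-subgraph condition $|E(G) \cap E(F_i)| \le 1$ in the definition of $\ex_F(n,G)$. Since $\chi(K_r) = r$ and more than one color is available whenever $k \ge 2$, there is no parity or coloring obstruction, and the proof is a direct translation with no estimates involved.
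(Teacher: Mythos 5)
Your proof is correct and is exactly the translation the paper has in mind: the paper states the equivalence between rainbow-$G$-free edge-disjoint $K_r$-packings and Berge-$G$-free linear $r$-uniform hypergraphs in a single sentence, and your argument supplies the details (linearity $\leftrightarrow$ edge-disjointness of the cliques, and the matching of Berge-$G$ with the multicolor condition $|E(G)\cap E(F_i)|\le 1$). No discrepancy with the paper's approach.
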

There are several improvements in the latter topic, which imply the following results.

\begin{result}[Ergemlidze, Győri and Methuku \cite{Gyori1}]

$$\ex_{K_3}(n,C_4) = \frac{1}{6}n^{3/2} + O(n).$$ and 
$$\ex_{K_3}(n,C_5) = \frac{1}{3\sqrt{3}}n^{3/2} + O(n)$$ 
\end{result}

\begin{result}[Füredi, Özkahya \cite{Furedi-Ozkahya}]
$$\ex_{K_3}(n,C_{2k+1}) \leq 2kn^{1+1/k}+9kn$$
\end{result}

\begin{result}[Collier-Cartaino, Graber and Jiang \cite{Jiang}]
$$\ex_{K_3}(n,C_k) = O \left(n^{1+\frac{1}{\left \lfloor\frac{k}{2}\right \rfloor}}\right).$$
\end{result}
\begin{result}[Gerbner, Methuku and Vizer \cite{Gerbner}]
  $$\ex_{K_r}(n,K_{2,t}) = (1-o_t(1))\frac{\sqrt{(t-1)}}{6}n^{3/2}.$$
\end{result}
For some generalizations, see \cite{gao}.

Following the footsteps of Erdős and Hajnal, Rödl with Phelphs \cite{Rodl1} and Sinajova \cite{Rodl2} investigated the minimum size of a maximal independent set in partial Steiner triple systems and in partial Steiner $r$-systems in general. (See also \cite{Rodl, Furedi}). They proved 

\begin{theorem}\label{stsrodl}
The minimum size of a maximal independent set in  Steiner triple systems and in partial Steiner systems of order $n$ is $\Theta(\sqrt{n\log{n}})$.
\end{theorem}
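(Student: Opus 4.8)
I read the stated quantity as the \emph{minimum independence number} of the class of (partial) Steiner triple systems of order $n$: writing $\alpha(\mathcal{S})$ for the size of a largest independent set of a system $\mathcal{S}$ (a largest vertex set containing no block), the assertion is $\min_{\mathcal{S}}\alpha(\mathcal{S})=\Theta(\sqrt{n\log n})$, the minimum ranging over all (partial) Steiner triple systems on $n$ points. This is the parameter studied in the cited works of Rödl with Phelps and with Sinajova. The literal reading as the smallest non-extendable independent set is not the intended one, since it cannot equal $\Theta(\sqrt{n\log n})$: for instance a partial system that is a disjoint union of $n/3$ triples forces every non-extendable independent set to contain exactly two points of each triple, hence to have size $2n/3$. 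I would prove the two matching bounds separately.

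For the lower bound I would use that a partial Steiner triple system is precisely a linear (uncrowded) $3$-uniform hypergraph: any two points lie in at most one block, so no two blocks share two points. Such a hypergraph has at most $\binom{n}{2}/3=\Theta(n^2)$ blocks, the maximum being attained (up to lower-order terms) by a full system. I would then apply the theorem of Ajtai, Komlós, Pintz, Spencer and Szemerédi on independent sets in uncrowded hypergraphs, by which a linear $3$-uniform hypergraph on $n$ vertices of average degree at most $t^{2}$ contains an independent set of size $\Omega\!\left((n/t)\sqrt{\log t}\right)$. A full system has average degree $\Theta(n)$, giving $t=\Theta(\sqrt n)$ and the bound $\Omega(\sqrt{n\log n})$; this bound is minimized by the densest (full) systems and only improves for sparser ones, so every (partial) system satisfies $\alpha(\mathcal{S})=\Omega(\sqrt{n\log n})$. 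The content here is the extra $\sqrt{\log n}$ factor over the elementary $\Omega(\sqrt n)$ furnished by random deletion, which is exactly what uncrowdedness buys through the semi-random (Rödl nibble) method underlying that theorem.

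For the upper bound it suffices to produce one system with $\alpha=O(\sqrt{n\log n})$, and I would do this by a first-moment computation on a random (near-full) Steiner triple system. A fixed $s$-set $S$ is independent precisely when no block lies inside it; treating the third point of each of the $\binom{s}{2}$ pairs within $S$ as roughly uniform, the expected number of blocks inside $S$ is of order $s^{3}/n$, so $S$ is independent with probability about $\exp\!\left(-\Theta(s^{3}/n)\right)$. A union bound over the $\binom{n}{s}\le\exp\!\left(O(s\log n)\right)$ sets gives expected number of independent $s$-sets at most $\exp\!\left(O(s\log n)-\Theta(s^{3}/n)\right)$, which tends to $0$ once $s\ge C\sqrt{n\log n}$ for a suitable constant $C$. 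Thus with high probability the random system has no independent set of that size, so some system attains $\alpha=O(\sqrt{n\log n})$, and together with the lower bound this yields $\Theta(\sqrt{n\log n})$.

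The main obstacle is the lower bound: the logarithmic gain is unavailable to the elementary deletion argument and forces one to invoke the full uncrowded-hypergraph independence theorem. The upper bound is conceptually routine, but the first-moment estimate must be made honest: the ``uniform third point'' heuristic has to be replaced by a genuine analysis of a random (partial) Steiner triple system --- for example a random greedy block-by-block process with the dependencies controlled, or known quasirandomness of random Steiner systems --- so that the probability a fixed $s$-set is independent provably decays like $\exp(-\Theta(s^{3}/n))$.
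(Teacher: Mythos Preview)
The paper does not prove this theorem; it is quoted as a result of Phelps--R\"odl \cite{Rodl1} and R\"odl--Sinajov\'a \cite{Rodl2} and used as a black box in Proposition~\ref{sts-cor}. Your reading of the statement (the minimum of the independence number over all such systems, not the smallest inclusion-maximal independent set) is the intended one, and your two-part sketch is in line with the arguments in those original sources: the lower bound is obtained exactly via the Ajtai--Koml\'os--Pintz--Spencer--Szemer\'edi theorem on uncrowded $3$-uniform hypergraphs applied to a linear triple system, and the upper bound comes from a probabilistic construction exhibiting a (partial) system with independence number $O(\sqrt{n\log n})$.

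Your caveat on the upper bound is well placed and is the only real gap. The heuristic $\Pr(S\text{ independent})\approx\exp(-\Theta(s^{3}/n))$ cannot be read off a uniformly random Steiner triple system without substantial machinery; in the cited constructions one instead builds a partial triple system by a controlled random process (e.g.\ choosing blocks greedily or in rounds) so that the events ``the block on pair $\{x,y\}$ avoids $S$'' are either independent or have quantifiable negative correlation, which makes the first-moment/union-bound calculation rigorous. If a full Steiner triple system is required for admissible $n$, one then completes the partial system, which can only decrease the independence number. With that replacement your outline becomes a correct proof and coincides with the literature the paper cites.
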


This result enables us to prove exact results for multicolor Turán numbers in the case $F=K_3$ and $G$ is a large enough clique.

\begin{prop}\label{sts-cor} There exists a constant $c>0$ such that 
 $\ex_{K_3}(n,K_t)=  \frac{1}{3}\binom{n}{2}$ provided that $t>c\sqrt{n\log{n}})$ and $n\equiv 1,3 \pmod 6$. 
\end{prop}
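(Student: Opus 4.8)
The plan is to realize the obvious upper bound $\frac13\binom n2$ by a Steiner triple system whose independence number is smaller than $t$. The upper bound itself holds for every $t$: a family of pairwise edge-disjoint triangles on $n$ vertices occupies at most $\binom n2$ edges and spends three of them on each triangle, so there are at most $\frac13\binom n2$ triangles; since $n\equiv 1,3\pmod 6$ forces $3\mid\binom n2$, this is the integer bound $\ex_{K_3}(n,K_t)\le\frac13\binom n2$. For the lower bound I would take a Steiner triple system $\mathcal S$ of order $n$ — which exists precisely because $n\equiv 1,3\pmod 6$ — and view its $\frac13\binom n2$ triples as a $K_3$-packing using every edge exactly once; everything then reduces to choosing $\mathcal S$ so that this packing contains no rainbow $K_t$.

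The second step is the translation between rainbow cliques and independent sets. In $\mathcal S$ every pair of vertices lies in a unique triple, and a triple $\{a,b,c\}$ contributes to a vertex set $S$ no edges when $|\{a,b,c\}\cap S|\le 1$, exactly one edge when precisely two of $a,b,c$ lie in $S$, and all three edges $ab,ac,bc$ when $\{a,b,c\}\subseteq S$. Hence the clique on a $t$-set $S$ is multicolored, meaning each triple meets it in at most one edge, if and only if no triple of $\mathcal S$ is contained in $S$, i.e. $S$ is an independent set of $\mathcal S$. So the packing built from $\mathcal S$ avoids a rainbow $K_t$ exactly when $\alpha(\mathcal S)<t$. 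Now I would invoke Theorem~\ref{stsrodl}: there is an absolute constant $c>0$ such that for all large admissible $n$ some Steiner triple system $\mathcal S$ of order $n$ has every maximal independent set — in particular every maximum one — of size at most $c\sqrt{n\log n}$. Taking $t>c\sqrt{n\log n}$ gives $\alpha(\mathcal S)<t$, hence a $K_3$-packing of size $\frac13\binom n2$ with no rainbow $K_t$, and together with the upper bound this yields $\ex_{K_3}(n,K_t)=\frac13\binom n2$.

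The only step requiring genuine care is the correct use of Theorem~\ref{stsrodl}. We need only its existence direction — an STS with independence number $O(\sqrt{n\log n})$, provided by Rödl--Phelps and Rödl--Sinajova — but we need it for an honest Steiner triple system rather than merely a partial one, since only a full STS makes the packing attain the size $\frac13\binom n2$; the theorem is stated for Steiner triple systems as well, so this is available. A minor bookkeeping point is the implicit ``$n$ sufficiently large'' needed for such an $\mathcal S$ to exist — for the remaining small $n$ one simply enlarges $c$ until the hypothesis $t>c\sqrt{n\log n}$ becomes vacuous there, so nothing is lost.
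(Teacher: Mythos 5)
Your proof is correct and takes essentially the same route as the paper: realize the trivial upper bound $\frac13\binom n2$ by the coloured shadow of a Steiner triple system of order $n$ with independence number $O(\sqrt{n\log n})$ (Phelps--R\"odl, R\"odl--\v{S}inajov\'a, i.e.\ Theorem~\ref{stsrodl}), using that a multicolor $K_t$ in such a packing is exactly an independent $t$-set of the system. The paper's argument is identical in substance, only less explicit about the upper bound and the rainbow-clique/independent-set translation, which you spell out correctly.
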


\begin{proof} Since $n\equiv 1,3 \pmod 6$, $n$ is an admissible order of a Steiner triple system. Theorem \ref{stsrodl} implies that if $t>c\sqrt{n\log{n}})$ for a well-chosen constant $c$, then there is a Steiner triple system with no independent set of size $t$. Colour the triples with different colours and take their (coloured) shadow, i.e. the triangles formed by the edges corresponding to the triples. Then there is no multicolor $K_t$ is the obtained coloured graph, hence the proof.
\end{proof}
 Note that for $n\equiv 5 \pmod 6$ and $t$ is large enough, a constant size error term appears compared to the case $n\equiv 1,3 \pmod 6$ as almost all edges can be covered by disjoint triangles; while the error term will be linear for even $n$ since at least one edge incident to each vertex will not be covered by triangles. 

\section{Concluding remarks and open problems}

In this section we pose some open problems and discuss their relevance and possible applications.

\begin{problem}
Determine the set of pairs $F, G$ for which  $\ex_F{(n,G)}\sim \frac{\ex{(n,G)}}{e(F)}$.
\end{problem}
According to a fundamental theorem of Wilson \cite{Wilson}, for all sufficiently large $n$, the complete graph $K_n$  has an $F$-decomposition subject to the divisibility conditions concerning the number of edges $F$ and $K_n$ and the greatest common divisor of the degrees of $F$. This suggests that for almost all graphs $F$, the multicolor Turán number might be $\frac{\ex{(n,G)}}{e(F)}$ if $G$ is a large enough tree, as in this case the (almost) extremal graphs consists of disjoint large cliques of size $v(G)-1$.\\
We also conjecture that $\ex_{K_{1,2}}{(n,C_4)}=(1+o(1))\frac{\ex{(n,C_4)}}{2}$ holds. If true, this would in turn imply an improvement in the proof of Ergemlidze, Győri,  Methuku and Salia on the generalized Turán number $\ex(n, C_3, C_5)$ \cite{Gyori}. Indeed, at some point in their proof they argue that erasing an edge from each element of a set of edge-disjoint cherries yields a $C_4$-free graph, hence the multicolor Turán number of $C_4$ applies. In fact it would be desirable to get an upper bound on the number of cherries not creating a multicolor quadrilateral supposing that the set if cherries does not create any $5$-cycles. We believe that under this condition,  extremal constructions may be gained  essentially the same way as  described by Bollobás and Győri \cite{Boll-Gyori}, which can be viewed as a blow-up of the incidence graph of a projective plane by doubling each vertex corresponding to a line in the projective plane. Here, the image of an edge becomes a cherry ($K_{1,2}$).
Note that since their short and neat proof on the upper bound, some improvements have been established \cite{Beka}.

\begin{problem}

Determine the set of pairs $F, G$ for which  $\ex_F{(n,G)}\sim \frac{1}{v(F)^2}n^2$.
\end{problem}

Note that these are exactly those graphs where the lower bound of Theorem \ref{haxell} is asymptotically sharp. We believe that the pair $F=C_5, G=C_3$ serves as an example for this class of pairs, thus the extremal construction is essentially the same as the answer for the famous Erdős pentagon problem which was resolved using flag algebra calculus \cite{Grzesik, Kral}.

\begin{problem}
Determine the set of pairs $F, G$ for which $\ex_F{(n,G)}= \Theta( \ex{(n,G)})$.
\end{problem}

Concerning the multicolor Turán number of cliques, the order of magnitude is an open question if $F=G$ while for cliques  with $F\subset G$, an exact result for infinitely many $n$ is only known for $G$ being large in view of Proposition \ref{sts-cor}. A natural question thus raises as follows.

\begin{problem}
For fixed $r$, determine the least $k=f(r)$ such that $\ex_{K_r}(n, K_k)$ is  equal to $\binom{n}{2} /\binom{r}{2}-~O(n)$. 
\end{problem}

It would be also interesting to obtain lower bounds for  $\ex_{K_r}(n, K_k)$ with $r>k$ in the form of $n^{2-o(1)}$ which would give a generalization of the result on the (6,3)-problem. We intend to work on further results in this area.




\end{document}